\documentclass[pdflatex,sn-mathphys-ay]{sn-jnl}

\usepackage{graphicx}%
\usepackage{multirow}%
\usepackage{amsmath,amssymb,amsfonts}%
\usepackage{amsthm}%
\usepackage{mathrsfs}%
\usepackage[title]{appendix}%
\usepackage{xcolor}%
\usepackage{textcomp}%
\usepackage{manyfoot}%
\usepackage{booktabs}%
\usepackage{algorithm}%
\usepackage{algorithmicx}%
\usepackage[noend]{algpseudocode}%
\usepackage{listings}%
\usepackage{tabulary}
\usepackage{pgfplots}
\usepackage{pgfplotstable} 
\usepackage{standalone}
\usepackage{subcaption}
\pgfplotsset{compat=1.8}

\usepackage{mathtools}
\usepackage{enumitem}

\newcommand{\RR}{\mathbb{R}}

\newcommand{\lb}[1]{\underline{#1}}
\newcommand{\ub}[1]{\overline{#1}}
\newcommand{\ib}[1]{\mathbf{#1}}
\newcommand{\imid}[1]{{#1}^c}
\newcommand{\irad}[1]{{#1}^\Delta}
\newcommand{\ubf}{\overline{f}_{\textnormal{fin}}}
\newcommand{\SD}{\mathcal{SD}}

\newcommand{\algorithmicbreak}{\textbf{break}}
\newcommand{\Break}{\State \algorithmicbreak}
\newcommand{\Not}{\textbf{not} }

\DeclarePairedDelimiter\abs{\lvert}{\rvert}%

\theoremstyle{thmstyleone}%
\newtheorem{theorem}{Theorem}
\newtheorem{corollary}[theorem]{Corollary}%

\theoremstyle{thmstyletwo}%

\theoremstyle{thmstylethree}%

\begin{document}

\title[Heuristics for the Worst Optimal Value of Interval TPs]{Heuristics for the Worst Optimal Value of Interval Transportation Problems}


\author*[1]{\fnm{Elif} \sur{Radová Garajová}}\email{elif.garajova@vse.cz} 

\author[1,2]{\fnm{Miroslav} \sur{Rada}}\email{miroslav.rada@vse.cz} 

\affil*[1]{\orgdiv{Department of Econometrics}, \orgname{Faculty of Informatics and Statistics, Prague University of Economics and Business}, \orgaddress{\street{n\'{a}m. W. Churchilla 4}, \city{Prague}, \postcode{13067}, \country{Czech Republic}}}

\affil[2]{\orgdiv{Department of Financial Accounting and Auditing}, \orgname{Faculty of Finance and Accounting, Prague University of Economics and Business}, \orgaddress{\street{n\'{a}m. W. Churchilla 4}, \city{Prague}, \postcode{13067}, \country{Czech Republic}}}


\abstract{An interval transportation problem represents a model for a~transportation problem in which the values of supply, demand, and transportation costs are affected by uncertainty and can vary independently within given interval ranges. One of the main tasks of solving interval programming models is computing the best and worst optimal value over all possible choices of the interval data. Although the best optimal value of an interval transportation problem can be computed in polynomial time, computing the worst (finite) optimal value was proved to be NP-hard. In this paper, we strengthen a~previous result showing a quasi-extreme decomposition for finding the worst optimal value, and building on the result, we design heuristics for efficiently approximating the value. Using a simplified encoding of the scenarios, we first derive a local search method and a genetic algorithm for approximating the worst optimal value. Then, we integrate the two methods into a memetic algorithm, which combines the evolutionary improvement of a genetic algorithm with individual learning implemented via local search. Moreover, we include numerical experiments for a practical comparison of the three different approaches. We also show that the proposed memetic algorithm is competitive with the available state-of-the-art methods for approximating the worst optimal value of interval transportation problems, this is demonstrated by finding the new best solutions for several instances, among others.}

\keywords{Transportation problem, Interval uncertainty, Heuristics, Worst value}



    \maketitle
	
	\section{Introduction}\label{sec:Intro}
	
	Optimization under uncertainty has played a crucial role in modeling and solving practical problems of operations research. Interval programming~\citep{Rohn:IntervalLinearProgramming:2006} provides a~framework for handling uncertainty in problems, where input data can vary independently within given lower and upper bounds. In this paper, we discuss the interval transportation problem~\citep{Cerulli:BestWorstValues:2017,Garajova:IntervalTransportationProblem:2023}, in which the values of supply, demand, and the unit transportation costs are uncertain data with given interval ranges.
	
	Analogously to computing optimal values in linear programming, one of the main tasks of solving an interval linear program is to find the best and the worst optimal value over all possible choices of the interval data, also known as the optimal value range problem~\citep{Mohammadi:IntervalLinearProgramming:2020}. In this paper, we address the problem of computing the bounds of the optimal value range restricted to feasible scenarios, which is an interesting task for interval programs that have infeasible scenarios making the worst bound of the traditionally defined optimal value range infinite. This leads to the task of computing the worst finite optimal value, which was also previously studied in the literature on interval linear programming~\citep{Hladik:WorstCaseFinite:2018}.
	
	In the case of interval transportation problems, the task of computing the best optimal value can be reduced to solving a single linear program with fixed data~\citep{Liu:TotalCostBounds:2003}. However, the decision problem connected with the task of computing the worst finite optimal value was proved to be NP-hard for all formulations of interval transportation problems~\citep{Garajova:ComplexityComputingWorst:2024,Hoppmann-Baum:ComplexityComputingMaximum:2022}. Therefore, efficient methods for approximating the value or solving special cases of interval transportation problems are also of interest. Various methods for tackling the problem have been proposed in the literature, including a nonlinear formulation based on duality by \cite{Liu:TotalCostBounds:2003}, a heuristic method by \cite{Juman:HeuristicSolutionTechnique:2014}, a permutation heuristic genetic algorithm by \cite{Xie:UpperBoundMinimal:2017}, an iterated local search by \cite{Cerulli:BestWorstValues:2017} and a mixed-integer linear programming formulation by \cite{Garajova:IntervalTransportationProblem:2023}. Specialized methods for the subclass of interval transportation problems that are immune against the more-for-less paradox were studied by \cite{Carrabs:ImprovedHeuristicApproach:2021a} and \cite{DAmbrosio:OptimalValueRange:2020}.
	
	Here, we build on a recent result showing a reduction for computing the worst finite optimal value using a finite number of so-called quasi-extreme scenarios, which was originally derived for problems immune against the more-for-less transportation paradox~\citep{Carrabs:ImprovedHeuristicApproach:2021a} and later generalized to interval transportation problems without any further assumption~\citep{Garajova:QuasiextremeReductionInterval:2024}. We first strengthen the result and show that it is sufficient to limit the reduction to balanced quasi-extreme scenarios (with equal total supply and total demand). Then, we define a suitable encoding of the scenarios and formulate a local search for finding a quasi-extreme scenario with a high optimal value via searching the neighborhood formed by slightly modified balanced scenarios. Moreover, we also adapt the framework of genetic algorithms~\citep{Sivanandam:IntroductionGeneticAlgorithms:2008,Xie:UpperBoundMinimal:2017} to provide an alternative heuristic approach for computing the worst optimal value using a population-based evolutionary process on the set of encoded quasi-extreme scenarios. Finally, we combine the two methods to design a memetic algorithm~\citep{Moscato:ModernIntroductionMemetic:2010}, in which the evolutionary improvement is augmented by individual learning through local search. We also compare the efficiency of the proposed methods and quality of the solutions produced by the different approaches in a~numerical experiment.
	
	\section{Interval Transportation Problem}\label{sec:ITP}
	Let us now formally introduce the mathematical model of the interval transportation problem and the relevant notation and terminology.
	
	Given two real matrices $\lb{C}, \ub{C} \in \RR^{m \times n}$ with $\lb{C} \le \ub{C}$, we define an \emph{interval matrix} with the \emph{lower bound} $\lb{C}$ and the \emph{upper bound} $\ub{C}$ as the set
	$
	\ib{C} = [\lb{C}, \ub{C}] = \{ C \in~\RR^{m \times n} : \lb{C} \le C \le \ub{C} \},
	$
	where all matrix inequalities are understood element-wise. An \emph{interval vector} is defined analogously.
	
	\subsection{Problem Formulation}\label{ssec:ITP:Formulation}
	Assume the following data are given:
	\begin{itemize}[topsep=0pt]
		\item a set of $m$ \emph{sources} denoted by $I = \{1, \dots, m\}$, such that each source $i \in I$ has a limited \emph{supply} $s_i$,
		\item a set of $n$ \emph{destinations} denoted by $J = \{1, \dots, n\}$, such that each destination $j \in J$ has a \emph{demand} $d_j$ to be satisfied,
		\item a \emph{unit transportation cost} $c_{ij}$ of transporting one unit of goods from source~$i$ to destination~$j$.
	\end{itemize}
	Moreover, assume that the values of supply $s_i$, demand $d_j$ and the unit costs $c_{ij}$ are uncertain quantities that can independently vary within given (non-negative) intervals $\ib{s}_i = [\lb{s}_i, \ub{s}_i]$, $\ib{d}_j = [\lb{d}_j, \ub{d}_j]$ and $\ib{c}_{ij} = [\lb{c}_{ij}, \ub{c}_{ij}]$, respectively. Note that an interval $[\lb{a}, \ub{a}]$ is said to be non-negative if all values within the interval are non-negative, i.e. $\lb{a} \ge 0$ holds.
	
	The objective of the transportation problem is to construct a transportation plan for shipping goods from the sources to the destinations while respecting the available supplies, satisfying the required demands and minimizing the total transportation costs. Here, we also take into account the uncertain nature of the input data.
	
	Using the framework of interval linear programming \citep{Rohn:IntervalLinearProgramming:2006}, we can model the \emph{interval transportation problem} as follows:
	\begin{align}\tag{ITP}\label{eq:itp:unbalanced}
		\begin{alignedat}{3}
			\text{minimize } 	& \quad & \omit\rlap{$\displaystyle \sum_{i \in I} \sum_{j \in J} [\lb{c}_{ij}, \ub{c}_{ij}] x_{ij}$} \\
			\text{subject to } 	&	& \sum_{j \in J} x_{ij} & \le [\lb{s}_i, \ub{s}_i], & \qquad & \forall i \in I,  \\
			&	& \sum_{i \in I} x_{ij} & = [\lb{d}_j, \ub{d}_j], 	&		& \forall j \in J, \\
			&	& x_{ij} 				& \ge 0, 					&		& \forall i \in I, j \in J,
		\end{alignedat}
	\end{align}
	where the real variables $x_{ij}$ represent the amount of goods transported from a~source $i \in I$ to a destination $j \in J$. The interval linear program \eqref{eq:itp:unbalanced} then corresponds to the set of all transportation problems in the same form with input data within the respective intervals. A particular transportation problem determined by a~choice of the coefficients from the intervals is called a \emph{scenario} of the interval transportation problem.
	
	\subsection{Optimal Values}\label{ssec:ITP:OptVal}
	One of the main goals of solving interval linear programs is to compute the optimal value range, which is the tightest interval enclosing the best and the worst optimal value over all possible scenarios (usually including infinite values for infeasible and unbounded scenarios). The problem of calculating the bounds of the traditionally defined optimal value range has been well studied in the interval programming literature~\citep{Mohammadi:IntervalLinearProgramming:2020}.
	
	Here, we consider the problem of computing the optimal value range of \eqref{eq:itp:unbalanced} restricted to the set of feasible scenarios, which leads to the task of finding the worst finite optimal value \citep{Hladik:WorstCaseFinite:2018}. It is easy to see that a particular transportation problem in \eqref{eq:itp:unbalanced} has a feasible solution if and only the total available supply is sufficient to satisfy the total demand. Let us denote by $\SD$ the set of all supply-demand vectors $(s,d) \in (\ib{s}, \ib{d})$ corresponding to feasible scenarios, i.e.
	\begin{align}
		\SD = \bigg\{(s,d) \in (\ib{s}, \ib{d}) : \sum_{i\in I} s_i \ge \sum_{j \in J} d_j \bigg\}.
	\end{align}
	
	Let $f(C,s,d)$ denote the optimal value of a scenario with a cost matrix $C$, supply vector $s$ and a demand vector $d$. Then, we can define the \emph{best optimal value} $\lb{f}$ and the \emph{worst finite optimal value} $\ubf$ as
	\begin{align}
		\lb{f}(\ib{C}, \ib{s}, \ib{d}) &= \min\, \{ f(C,s,d) : (s,d) \in \SD,\ C \in \ib{C} \},\\
		\ubf(\ib{C}, \ib{s}, \ib{d}) &= \max\, \{ f(C,s,d) : (s,d) \in \SD,\ C \in \ib{C} \}.\label{eq:ubf}
	\end{align}
	For interval linear programs with non-negative variables, the best optimal value $\lb{f}$ can be easily computed by solving a single linear program (see also \citep{Liu:TotalCostBounds:2003} for details regarding interval transportation problems). On the other hand, the problem of computing the worst finite optimal value was proved to be NP-hard, both in general and for the case of interval transportation problems \citep{Garajova:ComplexityComputingWorst:2024,Hoppmann-Baum:ComplexityComputingMaximum:2022}.
	
    For the purpose of this paper, we assume that $\SD \neq \emptyset$ (there exists at least one feasible scenario) and $\SD \neq (\ib{s}, \ib{d})$ (there exists at least one infeasible scenario). 
	Note that these are the most challenging cases for computing the worst finite optimal value, since in other cases the problem can be solved in polynomial time (either there is no feasible scenario, or all scenarios are feasible and $\ubf$ is attained for $(\ub{C}, \lb{s}, \ub{d})$, see also \citep{Xie:UpperBoundMinimal:2017}).
	
	\subsection{Quasi-extreme Reduction}\label{ssec:ITP:QuasiReduct}
	A scenario with a supply-demand vector $(s,d) \in (\ib{s}, \ib{d})$ is called \emph{supply-quasi-extreme}, if there exists at most one source $k \in I$ such that
	\begin{align*}
		&s_i \in \{\lb{s}_i, \ub{s}_i\} \text{ for each } i \in I \setminus \{k\}, \qquad \lb{s}_k \le s_k \le \ub{s}_k, \text{ and}\\
		&d_j \in \{\lb{d}_j, \ub{d}_j\} \text{ for each } j \in J.
	\end{align*}
	Similarly, a scenario with a supply-demand vector $(s,d) \in (\ib{s}, \ib{d})$ is called \emph{demand-quasi-extreme}, if there exists at most one destination $k \in J$ such that
	\begin{align*}
		&d_j \in \{\lb{d}_j, \ub{d}_j\} \text{ for each } j \in J\setminus\{k\}, \qquad \lb{d}_k \le d_k \le \ub{d}_k, \text{ and}\\
		&s_i \in \{\lb{s}_i, \ub{s}_i\} \text{ for each } i \in I.
	\end{align*}
	We refer to the supply-quasi-extreme and demand-quasi-extreme scenarios collectively as \emph{quasi-extreme scenarios}. Moreover, we refer to the value $s_k$ or $d_k$, which is not necessarily set to one of the bounds, as the \emph{free value} of the quasi-extreme scenario.
	
	A reduction for computing $\ubf$ using quasi-extreme scenarios was originally derived for interval transportation problems immune against the more-for-less paradox~\citep{Carrabs:ImprovedHeuristicApproach:2021a} and later extended to general interval transportation problems without any further assumption \citep{Garajova:QuasiextremeReductionInterval:2024}. Theorem~\ref{thm:quasi:ubf} states the main result allowing the reduction.
	
	\begin{theorem}[\citet{Garajova:QuasiextremeReductionInterval:2024}, \citet{Carrabs:ImprovedHeuristicApproach:2021a}]\label{thm:quasi:ubf}
		There exists a quasi-extreme scenario for which the worst finite optimal value $\ubf$ of interval transportation problem \eqref{eq:itp:unbalanced} is attained.
	\end{theorem}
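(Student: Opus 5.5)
We need to show that the maximum of $f$ over $\SD$ and $\ib{C}$ is attained at a quasi-extreme scenario. The plan is to fix the cost matrix and exploit concavity of the optimal value in the right-hand side. The first reduction is standard: since all variables are non-negative and $f(C,s,d)$ is nondecreasing in each $c_{ij}$ for fixed $(s,d)$, we may fix $C=\ub{C}$. Moreover, $f$ is continuous on the compact polytope $\SD$ (every scenario in $\SD$ is feasible and bounded, since $x\ge 0$ and costs are non-negative), so the maximum is attained at some $(s^*,d^*)\in\SD$. It therefore suffices to show that the maximum of $g(s,d) := f(\ub{C},s,d)$ over the polytope $\SD$ is attained at a quasi-extreme point.

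Next, we use LP duality. The dual of the scenario reads: maximize $\sum_j d_j v_j - \sum_i s_i u_i$ subject to $v_j - u_i \le \ub{c}_{ij}$ and $u \ge 0$. Hence $g(s,d) = \max_{(u,v)\in D} (d^T v - s^T u)$, where the dual feasible set $D$ does not depend on $(s,d)$. Thus $g$ is the pointwise maximum of linear functions of $(s,d)$, i.e. it is \emph{convex} on $\SD$. Since $D$ is a polyhedron with finitely many vertices, and the maximum is finite on $\SD$, the function $g$ is in fact piecewise linear convex. A convex function attains its maximum over a polytope at a vertex. So the worst finite optimal value is attained at a vertex of $\SD$.

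The remaining step is to characterize the vertices of
\[
\SD = \Big\{(s,d)\in(\ib{s},\ib{d}) : \textstyle\sum_i s_i - \sum_j d_j \ge 0\Big\}.
\]
This is a box intersected with a single halfspace. A vertex is determined by $m+n$ linearly independent active constraints. At most one of these can come from the halfspace, so at least $m+n-1$ box bounds must be active, and these necessarily lie on distinct coordinates. Hence at most one coordinate is strictly between its bounds. If that free coordinate is a supply $s_k$, the point is supply-quasi-extreme; if it is a demand $d_k$, the point is demand-quasi-extreme; if there is no free coordinate, the point is both.

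The main obstacle is the care needed around the convexity claim. One must check that $g$ is finite on all of $\SD$, which holds because feasibility is equivalent to total supply being at least total demand and the objective is bounded below by $0$. With finiteness established, the max-of-linear representation holds exactly on $\SD$ via strong duality, and the vertex argument goes through. The degenerate case where $\SD$ is lower-dimensional is handled by the same active-constraint count, so no separate treatment is needed.
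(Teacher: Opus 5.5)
Your proof is correct and complete. The paper itself gives no proof of Theorem~\ref{thm:quasi:ubf}; it imports the result from the cited works. So the comparison is with the duality argument in the spirit of those works, which starts from the bilinear formulation $\ubf=\max\{d^Tv-s^Tu : (u,v)\in D,\ (s,d)\in\SD\}$ and proceeds in three steps:
\begin{enumerate}
\item Fix $(u,v)$ at an optimal dual solution $(u^*,v^*)$ of a worst scenario.
\item Take a basic optimal solution of the linear program $\max\{d^Tv^*-s^Tu^* : (s,d)\in\SD\}$. Its feasible set is a box plus one coupling constraint, so such a solution has at most one coordinate off its bounds.
\item Conclude by weak duality that this basic solution is again a worst scenario.
\end{enumerate}

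Your convexity-plus-vertices argument is the same mechanism, applied globally rather than at a maximizer. Its advantage is that it does not need a dual optimum at a maximizer, and it gives attainment of $\ubf$ at once. Your separate continuity claim in the first paragraph is not actually justified by feasibility and boundedness alone. It is also unnecessary: the supremum is already a maximum over the finitely many vertices of $\SD$.

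Your vertex analysis also gives slightly more than Theorem~\ref{thm:quasi:ubf}:
\begin{itemize}
\item A vertex with a coordinate strictly inside its interval must have the coupling inequality active. It is therefore automatically balanced, and its free value is exactly the one prescribed by Theorem~\ref{thm:quasi:fixLast}.
\item The only vertices of $\SD$ that can be unbalanced are corners of the box. These are precisely the scenarios that Theorems~\ref{thm:quasi:fixLast} and~\ref{thm:quasi:fixLast:balanced} then rebalance by lowering supplies or raising demands. That step relies on monotonicity of $f$, which your proof of Theorem~\ref{thm:quasi:ubf} never needs.
\end{itemize}
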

	
	The set of all quasi-extreme scenarios is not finite, in general, since the free supply value~$s_k$ or the free demand value~$d_k$ can be set to any value from the corresponding interval. However, Theorem~\ref{thm:quasi:fixLast} shows that for the purpose of computing~$\ubf$, a~suitable value of $s_k$ or $d_k$ can also be calculated based on the other values of supply and demand. Moreover, note that because the variables are non-negative, the cost matrix~$\ub{C}$ yields the worst optimal value.
	
	\begin{theorem}[\citet{Garajova:QuasiextremeReductionInterval:2024}]\label{thm:quasi:fixLast}
		Let the worst optimal value $\ubf(\ub{C}, \ib{s}, \ib{d})$ be attained for a quasi-extreme scenario with a supply-demand vector $(s,d) \in \SD$ and let $k$ be the index of the free value. Then, $\ubf$ is also attained for a scenario with the supply-demand vector $(s', d')$ where
		\[
		s'_k = \max \left\{ \lb{s}_k,\ \sum_{j \in J} d_j - \hskip-7pt\sum_{i \in I\backslash \{k\}}\hskip-8pt s_i\right\},  \quad s'_i = s_i \text{ for } i \in I\backslash\{k\}, \quad d' = d
		\]
		for a supply-quasi-extreme scenario $(\ub{C},s,d)$, or,
		\[
		d'_k = \min \left\{ \ub{d}_k,\  \sum_{i \in I} s_i -\hskip-7pt\sum_{j \in J\backslash \{k\}}\hskip-8pt d_j \right\}, \quad d'_j = d_j \text{ for } j \in J\backslash\{k\}, \quad s' = s
		\]
		for a demand-quasi-extreme scenario $(\ub{C},s,d)$.
	\end{theorem}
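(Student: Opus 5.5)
The plan is to handle the supply-quasi-extreme case in detail; the demand case is symmetric. Fix $C = \ub{C}$ and let $(s,d) \in \SD$ be a quasi-extreme scenario attaining $\ubf$ with free index $k \in I$. Write $D = \sum_{j\in J} d_j$ and $S_{-k} = \sum_{i \ne k} s_i$. The new value is $s'_k = \max\{\lb{s}_k, D - S_{-k}\}$. I will show three things in turn: $(s',d) \in \SD$, $f(\ub{C},s',d) \ge f(\ub{C},s,d)$, and therefore equality holds by maximality of $\ubf$.

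\textbf{Step 1 (membership in $\SD$).} Since $(s,d)\in\SD$, we have $s_k \ge D - S_{-k}$. Together with $s_k \ge \lb{s}_k$ this gives $s'_k \le s_k \le \ub{s}_k$. Also $s'_k \ge \lb{s}_k$ by definition, so $s' \in \ib{s}$. Finally, $\sum_i s'_i = S_{-k} + s'_k \ge S_{-k} + (D - S_{-k}) = D$, so the reduced scenario is feasible. The other coordinates are unchanged, so $(s',d)$ is again supply-quasi-extreme with the same free index.

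\textbf{Step 2 (monotonicity of the optimal value in the supply).} Decreasing $s_k$ to $s'_k$ shrinks the feasible set of the transportation LP: the only change is the constraint $\sum_j x_{kj} \le s_k$ becoming $\sum_j x_{kj} \le s'_k$. Any plan feasible for $(s',d)$ is therefore feasible for $(s,d)$. Because we minimize, $f(\ub{C},s',d) \ge f(\ub{C},s,d) = \ubf$. The reduced scenario is feasible by Step 1, so its optimal value is finite and is an admissible candidate in \eqref{eq:ubf}. Hence $f(\ub{C},s',d) \le \ubf$, and equality follows.

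\textbf{Step 3 (demand case).} Here the free index $k \in J$, and we set $d'_k = \min\{\ub{d}_k, \sum_i s_i - \sum_{j\ne k} d_j\}$. Feasibility of $(s,d)$ gives $d_k \le \sum_i s_i - \sum_{j \ne k} d_j$, and $d_k \le \ub{d}_k$, so $d_k \le d'_k \le \ub{d}_k$. Moreover $\sum_j d'_j \le \sum_i s_i$, so $(s,d')\in\SD$.

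Monotonicity now needs more care, because increasing a demand changes an equality constraint. I expect this to be the main obstacle, since the primal feasible sets are not nested. The argument goes through LP duality. The dual of the scenario is
\[
\max \Big\{ \sum_j d_j v_j - \sum_i s_i u_i : v_j - u_i \le \ub{c}_{ij},\ u \ge 0 \Big\}.
\]
The dual feasible region does not depend on $(s,d)$. Both scenarios are primal feasible and the costs are non-negative, so both optima are finite and attained by strong duality.

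Take an optimal dual pair $(u,v)$ for the scenario $(s,d')$. If $v_k \ge 0$, then
\[
f(\ub{C},s,d') \ge \sum_j d'_j v_j - \sum_i s_i u_i \ge \sum_j d_j v_j - \sum_i s_i u_i \quad\text{evaluated at any dual optimum of } (s,d),
\]
which gives $f(\ub{C},s,d') \ge f(\ub{C},s,d)$. It is therefore enough to show that some dual optimal solution of $(s,d)$ has $v_k \ge 0$, and to evaluate that solution in $(s,d')$.

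To get $v_k \ge 0$, consider any dual feasible $(u,v)$ with $v_k < 0$. Raising $v_k$ to $0$ keeps feasibility, since $0 - u_i \le 0 \le \ub{c}_{ij}$. It also does not decrease the objective, because $d_k \ge 0$. So an optimal dual solution of $(s,d)$ with $v_k \ge 0$ exists, and evaluating it at $d' \ge d$ gives the required monotonicity. Combined with maximality of $\ubf$, this yields equality in the demand case.
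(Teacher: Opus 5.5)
Your proof is correct and takes essentially the same route as the paper: the paper does not reprove this cited result, but its proof of Theorem~\ref{thm:quasi:fixLast:balanced} rests on the same facts. You check that the adjusted scenario stays in $\SD$, then use that decreasing a supply or increasing a demand (everything else fixed) cannot lower the optimal value, so maximality of $\ubf$ forces equality. The only difference is that you prove these monotonicity facts yourself rather than citing Cerulli et al.

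There is one small slip. The dual pair in your first display must be optimal for $(s,d)$, not $(s,d')$, as your next sentence correctly states. Also, demand monotonicity does not actually need duality: removing $d'_k-d_k$ units of flow into destination $k$ from an optimal plan of $(s,d')$ gives a feasible plan for $(s,d)$ that, because costs are nonnegative, costs no more.
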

	
	A scenario $(C, s, d)$ is said to be \emph{balanced} if the total supply is equal to the total demand, i.e. $\sum_{i \in I} s_i = \sum_{j \in J} d_j$ holds.
	We can further strengthen the result of Theorem~\ref{thm:quasi:fixLast} and prove that exploring the unbalanced scenarios is not necessary, since the value~$\ubf$ will always be attained for a balanced one.
	
	\begin{theorem}\label{thm:quasi:fixLast:balanced}
		Consider a feasible quasi-extreme scenario $(\ub{C}, s, d)$ with a supply-demand vector $(s,d) \in \SD$ and let $k$ be the index of the free value.
		
		If the scenario is supply-quasi-extreme and 
		\begin{equation}
			\lb{s}_k > \sum_{j \in J} d_j - \hskip-7pt\sum_{i \in I\backslash \{k\}} \hskip-7pt s_i \label{eq:quasi:fix:Sk}
		\end{equation}
		holds, or, if the scenario is demand-quasi-extreme and 
		\begin{equation}
			\ub{d}_k < \sum_{i \in I} s_i - \hskip-7pt\sum_{j \in J\backslash \{k\}}\hskip-7pt d_j
		\end{equation}
		holds, then there exists a quasi-extreme scenario $(\ub{C}, s', d')$ with $\sum_{i\in I} s'_i = \sum_{j \in J} d'_j$ such that $f(\ub{C}, s, d) \le f(\ub{C}, s', d')$.
	\end{theorem}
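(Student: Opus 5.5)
The plan is to handle both cases at once. In each case the hypothesis says the scenario has a strict \emph{supply surplus} that cannot be removed by moving the free value inside its interval. I will then remove the surplus in two phases, using two monotonicity properties of $f(\ub{C},\cdot,\cdot)$.

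\textbf{Step 1: reduce to a surplus scenario.} In the supply-quasi-extreme case, replacing $s_k$ by $\lb{s}_k$ only decreases a right-hand side of the $\le$ constraints. Condition \eqref{eq:quasi:fix:Sk} keeps the scenario feasible, so the optimal value does not decrease. The reason is the first monotonicity fact: $f(\ub{C},s,d)$ is nonincreasing in $s$ on $\SD$, because lowering $s$ shrinks the feasible set. The resulting scenario has all supplies at bounds and $\sum_i s_i > \sum_j d_j$. In the demand-quasi-extreme case, $d_k\le\ub{d}_k$ together with the hypothesis already gives $\sum_i s_i > \sum_j d_j$, with all supplies at bounds. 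So in both cases we have $(s,d)$ with extreme supplies and positive surplus $\sum_i s_i-\sum_j d_j>0$.

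\textbf{Phase 1 (lower supplies).} Go through the sources with $s_i=\ub{s}_i>\lb{s}_i$ one by one.
\begin{itemize}
\item If $s_i$ can be set to $\lb{s}_i$ with total supply still at least total demand, do so.
\item Otherwise, set $s_i=\sum_j d_j-\sum_{l\ne i}s_l\in[\lb{s}_i,\ub{s}_i]$ and stop.
\end{itemize}
By the monotonicity in $s$, the value never decreases. If we stop in the second branch, the scenario is balanced and supply-quasi-extreme with free index $i$ (at the moment demands are possibly non-extreme only in the demand case through $d_k$). In that situation I first set $d_k$ to a bound in Phase 2 instead; see below. If the loop ends without stopping, we have $s=\lb{s}$ with surplus remaining.

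\textbf{Phase 2 (raise demands).} This is the main obstacle, because $f$ is not monotone in $d$ (the more-for-less paradox). I would handle it by duality. Take the dual
\[
\max\Big\{\sum_j d_jv_j-\sum_i s_iu_i : u\ge 0,\ v_j-u_i\le \ub{c}_{ij}\Big\}
\]
and an optimal solution $(u,v)$. Since $\ub{C}\ge 0$ and $u\ge0$, we may replace each $v_j$ by $\min_i(\ub{c}_{ij}+u_i)\ge 0$.
\begin{itemize}
\item This keeps dual feasibility.
\item It does not decrease the objective, because $d\ge 0$.
\end{itemize}
So there is an optimal dual solution with $v\ge 0$. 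For any feasible $(s,d')$ with $d'\ge d$, weak duality then gives
\[
f(\ub{C},s,d')\ge d'^{T}v-s^{T}u\ge d^{T}v-s^{T}u=f(\ub{C},s,d).
\]
Hence increasing demands never hurts while the scenario remains feasible.

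Now raise the demands (the non-extreme $d_k$ first, then the others currently at $\lb{d}_j$) one at a time toward $\ub{d}_j$.
\begin{itemize}
\item If raising $d_j$ all the way to $\ub{d}_j$ keeps total demand at most total supply, do so.
\item Otherwise, set $d_j=\sum_i s_i-\sum_{l\ne j}d_l$ and stop.
\end{itemize}
Each step is justified by the inequality above. When the procedure stops, all supplies are at bounds, all demands except $j$ are at bounds, and the scenario is balanced, so it is demand-quasi-extreme.

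\textbf{The procedure must stop.} Suppose it never stops. Then $s=\lb{s}$ and $d=\ub{d}$ with $\sum_i\lb{s}_i\ge\sum_j\ub{d}_j$. That would make every scenario feasible, contradicting the standing assumption $\SD\ne(\ib{s},\ib{d})$.

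In the case where Phase 1 stops early with $d_k$ still free, run Phase 2 on $d_k$ first, which again preserves or increases the value. This yields the balanced quasi-extreme scenario $(\ub{C},s',d')$ with $f(\ub{C},s,d)\le f(\ub{C},s',d')$.
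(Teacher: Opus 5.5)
Your route is the paper's. You push the free supply down to $\lb{s}_k$, lower the remaining supplies one by one, and if that is not enough, raise demands one by one until total supply equals total demand. Monotonicity of $f(\ub{C},\cdot,\cdot)$ justifies each step, and $\sum_i \lb{s}_i < \sum_j \ub{d}_j$ (existence of an infeasible scenario) guarantees that balance is reached. Where the paper cites Cerulli et al.\ for monotonicity in $d$, you prove it via a nonnegative optimal dual solution. That argument is correct, but your motivation for it is not. In \eqref{eq:itp:unbalanced} the supply rows are inequalities and $\ub{C}\ge 0$, so monotonicity in $d$ is immediate in the primal. Take an optimal plan for $d'\ge d$ and cut the shipments into each destination $j$ down to $d_j$; the supply constraints stay satisfied and the cost does not increase. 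The more-for-less paradox concerns the model with equality supply constraints and is not an obstacle here.

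The step that fails as written is the demand-quasi-extreme case. You keep $d_k$ at its possibly interior value while running Phase 1. If Phase 1 stops in its second branch, you can end with a balanced scenario in which both $s_i$ and $d_k$ lie strictly inside their intervals, which is not quasi-extreme. Running Phase 2 on $d_k$ at that point does not help:
\begin{itemize}
\item the scenario is already balanced, so $d_k$ cannot be raised without destroying feasibility;
\item compensating by raising some supply may lower $f$;
\item lowering $d_k$ may also lower $f$.
\end{itemize}
The repair is to do in Step 1 the exact analogue of $s_k\mapsto\lb{s}_k$: set $d_k:=\ub{d}_k$ immediately. The hypothesis $\ub{d}_k<\sum_i s_i-\sum_{j\ne k}d_j$ is precisely what keeps a strict surplus after this change, and demand monotonicity keeps $f$ from decreasing. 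After that every entry is at a bound, and Phases 1 and 2 go through verbatim. If Phase 1 now stops early, the result is supply-quasi-extreme, which the statement allows. This is what the paper's ``analogously'' amounts to.
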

	\begin{proof}
		First, assume that the scenario $(\ub{C}, s, d)$ from the statement of the theorem is supply-quasi-extreme, with a free value $s_k$. Consider a modified completely extreme scenario with a supply vector \[ s^* = (s_1, \dots, s_{k-1}, \lb{s}_k, s_{k+1}, \dots, s_m)^T \] and the same demand vector $d$. Replacing the value $s_k$ with the (lower or equal) value $\lb{s}_k$ in the scenario can only lead to an increase in the optimal value, that is, we have $f(\ub{C}, s, d) \le f(\ub{C}, s^*, d)$. Furthermore, by inequality \eqref{eq:quasi:fix:Sk}, the modified scenario is still feasible.
		
		From the assumption that there exists an infeasible scenario we know that $\sum \lb{s}_i \le \sum \ub{d}_j$ holds. Therefore, it is possible to sequentially decrease the remaining supply values down to the lower bounds, and, if necessary, sequentially increase the demand values up to the upper bounds, until we obtain a scenario $(\ub{C}, s', d')$ with $\sum s'_i = \sum d'_j$. By construction, the obtained scenario is quasi-extreme and obviously also feasible. Moreover, decreasing the supply with fixed demand or increasing the demand for fixed supply can only lead to a worse optimal value (see \citep{Cerulli:BestWorstValues:2017} for details). Therefore, we obtain the desired inequality
		\[
		f(\ub{C}, s', d') \ge f(\ub{C}, s^*, d) \ge f(\ub{C}, s, d).
		\]
		The latter case with a demand-quasi-extreme scenario can be proved analogously.
	\end{proof}
	
	The following corollary states a direct consequence of Theorem~\ref{thm:quasi:fixLast:balanced}, which yields an exact finite algorithm for computing $\ubf$ over balanced quasi-extreme scenarios.
	
	\begin{corollary}
		The worst finite optimal value $\ubf$ of interval transportation problem \eqref{eq:itp:unbalanced} can be computed as the maximum over the optimal values of all balanced quasi-extreme scenarios.
	\end{corollary}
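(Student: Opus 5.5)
We will show two inequalities. Let $M$ denote the maximum of $f(\ub{C}, s, d)$ over all balanced quasi-extreme supply-demand vectors $(s,d) \in (\ib{s}, \ib{d})$. Every balanced vector satisfies $\sum s_i = \sum d_j$, so it lies in $\SD$. Hence each such scenario is feasible and admissible in \eqref{eq:ubf}, and $M \le \ubf$ is immediate. We also need the maximum to exist and to be finite. For a fixed pattern of bounds, a balanced quasi-extreme scenario is determined by the choice of the free index and the endpoint assignment of the other entries. Balancing then forces the free value to equal $\sum_j d_j - \sum_{i \ne k} s_i$ (or the demand analogue). So there are finitely many such scenarios, and $M$ is a maximum of finitely many finite values. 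Each value is finite because a feasible transportation problem with non-negative costs is bounded below.

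For the reverse inequality, start from Theorem~\ref{thm:quasi:ubf} and the remark that the cost matrix $\ub{C}$ yields the worst value. Together they give a quasi-extreme $(s,d) \in \SD$ with $f(\ub{C}, s, d) = \ubf$. Apply Theorem~\ref{thm:quasi:fixLast} to replace it by $(s',d')$ with the free value set by the stated formula, still attaining $\ubf$. We then split into two cases for the supply-quasi-extreme case. If $\lb{s}_k \le \sum_j d_j - \sum_{i\ne k} s_i$, the maximum in the formula is the second term, so $s'_k$ equals it, and $(s',d')$ is already balanced. It is still quasi-extreme, since $s'_k \le s_k \le \ub{s}_k$ by feasibility of $(s,d)$. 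Otherwise, inequality \eqref{eq:quasi:fix:Sk} holds, and Theorem~\ref{thm:quasi:fixLast:balanced} produces a balanced quasi-extreme $(s'',d'')$ with $f(\ub{C},s'',d'') \ge f(\ub{C},s,d) = \ubf$. The demand-quasi-extreme case is symmetric, using $\min$ and $\ub{d}_k$. In all cases some balanced quasi-extreme scenario attains at least $\ubf$, so $M \ge \ubf$.

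The only delicate point is the bookkeeping in the first case. We must confirm that the value from Theorem~\ref{thm:quasi:fixLast} stays inside $[\lb{s}_k, \ub{s}_k]$, so that the resulting scenario is genuinely quasi-extreme and balanced. This follows from $(s,d) \in \SD$. We should also note that the set of balanced quasi-extreme scenarios is nonempty, which follows from the standing assumptions on $\SD$ via the construction in the proof of Theorem~\ref{thm:quasi:fixLast:balanced}. The rest is a direct combination of the earlier results.
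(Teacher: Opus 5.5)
Your proof is correct and follows essentially the argument the paper intends. The paper gives no separate proof and calls the corollary a direct consequence of Theorem~\ref{thm:quasi:fixLast:balanced}; that amounts to your combination of Theorems~\ref{thm:quasi:ubf}, \ref{thm:quasi:fixLast} and \ref{thm:quasi:fixLast:balanced}, split on whether the free value from Theorem~\ref{thm:quasi:fixLast} is the balancing value or the bound. Your extra bookkeeping fills in details the paper leaves implicit: the easy inequality $M \le \ubf$, finiteness of the candidate set, and $s'_k \le s_k \le \ub{s}_k$ from $(s,d) \in \SD$.
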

	
	Algorithm~\ref{alg:exactQuasi} summarizes the method for computing $\ubf$ exactly by examining at most $(m+n)2^{m+n-1}$ relevant balanced quasi-extreme scenarios.


	\begin{algorithm}
		\begin{algorithmic}
			\State worstVal $\gets -\infty$
			\For{$k \in I$}
			\For{all choices of $s_i \in \{\lb{s}_i, \ub{s}_i\}$ for $i \in I \setminus \{k\}$ and $d_j \in \{\lb{d}_j, \ub{d}_j\}$ for $j \in J$}
			\If{$\sum_{j\in J} d_j - \sum_{i\in I\backslash\{k\}} s_i > \ub{s}_k$}
			\State Scenario $(\ub{C},s,d)$ is infeasible for all $s_k \in \ib{s}_k$ and can be ignored
			\ElsIf{$\lb{s}_k > \sum_{j \in J} d_j - \sum_{i \in I\backslash \{k\}} s_i$}
			\State Scenario $(\ub{C},s,d)$ is unbalanced and can be ignored
			\ElsIf{$f(\ub{C},s,d) > $ worstVal}
			\State worstVal $\gets f(\ub{C},s,d)$ 
			\State worstScen $\gets (\ub{C},s,d)$
			\EndIf
			\EndFor
			\EndFor
			\For{$k \in J$}
			\For{all choices of $s_i \in \{\lb{s}_i, \ub{s}_i\}$ for $i \in I$ and $d_j \in \{\lb{d}_j, \ub{d}_j\}$ for $j \in J \setminus \{k\}$}
			\If{$\sum_{i\in I} s_i - \sum_{j\in J\backslash\{k\}} d_j < \lb{d}_k$}
			\State Scenario $(\ub{C},s,d)$ is infeasible for all $d_k \in \ib{d}_k$ and can be ignored
			\ElsIf{$\ub{d}_k < \sum_{i \in I} s_i - \sum_{j \in J\backslash \{k\}} d_j$}
			\State Scenario $(\ub{C},s,d)$ is unbalanced and can be ignored
			\ElsIf{$f(\ub{C},s,d) > $ worstVal}
			\State worstVal $\gets f(\ub{C},s,d)$ 
			\State worstScen $\gets (\ub{C},s,d)$
			\EndIf
			\EndFor
			\EndFor
			\Return worstVal, worstScen
		\end{algorithmic}
		\caption{A quasi-extreme reduction for computing $\ubf$ of \eqref{eq:itp:unbalanced}}\label{alg:exactQuasi}
	\end{algorithm}
	
	\section{Heuristics for the Interval Transportation Problem}\label{sec:Alg}
    In this section, we derive heuristic approaches for finding a lower bound on the worst optimal value $\ubf$ based on the quasi-extreme reduction. First, let us define a simplified encoding of the quasi-extreme scenarios.
    
	Let $\chi$ denote the set of all vectors $(a_1, \dots, a_{m+n}) \in \{-1,0,1\}^{m+n}$ with exactly one $a_i = 0$.
	Given an interval transportation problem, each vector $a \in \chi$ can be associated with a quasi-extreme scenario $(\ub{C}, s, d)$, where $a_i = -1$ corresponds to the values set to the lower bound of the respective interval, $a_i = 1$ corresponds to the values set to the upper bound and the free value $s_k$ or $d_k$ is calculated as described in Theorem~\ref{thm:quasi:fixLast} (or, if there is no choice of the free value, which would make the scenario feasible, we set $s_k = \ub{s}_k$ or $d_k = \lb{d}_k$). Formally, for a given $a \in \chi$, we have a~supply-demand vector $(s,d)$ defined as follows:
	\begin{align*}
		&\forall i \in \{1, \dots, m\} \colon s_i = \imid{s}_i + a_i  \irad{s}_i &&\text{if } a_i \neq 0,\\
		&\forall j \in \{1, \dots, n\} \colon d_j = \imid{d}_j + a_{m+j}  \irad{d}_j &&\text{if } a_{m+j} \neq 0,\\
		&s'_k = \max \left\{ 
		\lb{s}_k,\ 
		\min \left\{ 
		\sum_{j \in J} d_j - \hskip-7pt\sum_{i \in I\backslash \{k\}}\hskip-8pt s_i, 
		\ub{s}_k 
		\right\}\right\}, 
		&& \text{if } a_k = 0 \text{ with } k \in \{1, \dots, m\},\\
		&d'_k = \min \left\{ 
		\ub{d}_k,\  
		\max \left\{
		\sum_{i \in I} s_i -\hskip-7pt\sum_{j \in J\backslash \{k\}}\hskip-8pt d_j,
		\lb{d}_k 
		\right\}\right\},
		&& \text{if } a_{m+k} = 0 \text{ with } k \in \{1, \dots, n\}.
	\end{align*}
	Therefore, the set $\chi$ introduced above encodes all quasi-extreme scenarios relevant to computing the worst finite optimal value $\ubf$ of a given interval transportation problem. In the following, we describe heuristic algorithms for approximating $\ubf$, which work in the search space defined by the set $\chi$ (or its subset). Here, we define the value of a given $a \in \chi$ encoding a supply-demand vector $(s, d)$ as the optimal value of the scenario $(\ub{C}, s, d)$, i.e. we have $f(a) = f(\ub{C},s,d)$.


	\subsection{Local Search Algorithm}\label{ssec:Alg:LocalSearch}

	In this section, we derive a local search method for approximating the worst finite optimal value of an interval transportation problem by exploring a suitable subset of the configurations in $\chi$. Consider a configuration $(a_1, \dots, a_{m+n}) \in \chi$ with \mbox{$a_k = 0$} encoding a balanced quasi-extreme scenario. For an index $i \neq k$, the following modification is used to obtain a neighboring configuration: 
    \begin{itemize}
    \item We attempt to switch the value~$a_i$ to the other bound, while modifying the free value corresponding to $a_k$ to keep the scenario balanced. 
    \item If such modification is not possible, the value corresponding to $a_k$ is set to one of the bounds and the value corresponding to $a_i$ becomes the free value instead.
    \end{itemize}

	Let $\gamma^k_i(a)$ denote such perturbation of the $i^\text{th}$ coefficient of the scenario encoded by $a$. Then, we define the \emph{neighborhood} $\mathcal{N}(a)$ of a given configuration $a \in \chi$ with $a_k = 0$ as the set
	\[
	\mathcal{N}(a) = \{ \gamma^k_i(a) : i \in \{1, \dots, m+n\} \backslash \{k\} \}.
	\]
	
	Let us now summarize the framework of the local search heuristic for computing the worst finite optimal value of an interval transportation problem. In each step of the algorithm, a neighboring configuration is chosen based on the optimal value of the corresponding scenario. Algorithm \ref{alg:localSearch:first} utilizes the first improvement policy, where the first encountered neighboring configuration encoding a scenario with a lower optimal value $f(\ub{C}, s, d)$ than the current one is chosen. 
	
	\begin{algorithm}[h!]
		\begin{algorithmic}
			\State $a \gets$ initial feasible quasi-extreme scenario
			\Repeat
			\State improved $\gets$ False
			\For{$a' \in \mathcal{N}(a)$}
			\If{$f(a') > f(a)$}
			\State $a \gets a'$
			\State improved $\gets$ True
			\Break
			\EndIf
			\EndFor
			\Until{\Not improved}\\
			\Return $a$
		\end{algorithmic}
		\caption{A first-improvement local search for computing $\ubf$ of \eqref{eq:itp:unbalanced}}\label{alg:localSearch:first}
	\end{algorithm}
	
	On the other hand, Algorithm \ref{alg:localSearch:best} employs the (more expensive) best improvement policy, where all neighboring configurations are explored and the quasi-extreme scenario with the lowest optimal value is used to select the following configuration.
	
	\begin{algorithm}[h]
		\begin{algorithmic}
			\State $a \gets$ initial feasible quasi-extreme scenario
			\Repeat
			\State improved $\gets$ False
			\State neighValue $\gets -\infty$
			\For{$a' \in \mathcal{N}(a)$}
			\If{$f(a') > $ neighValue}
			\State neighValue $\gets f(a')$
			\State neigh $\gets a'$
			\EndIf
			\EndFor
			\If{neighValue $> f(a)$}
			\State $a \gets$ neigh
			\State improved $\gets$ True
			\EndIf
			\Until{\Not improved}\\
			\Return $a$
		\end{algorithmic}
		\caption{A best-improvement local search for computing $\ubf$ of \eqref{eq:itp:unbalanced}}\label{alg:localSearch:best}
	\end{algorithm}
	
	\subsection{Genetic Algorithm}\label{ssec:Alg:Genetic}
	Genetic algorithms~\citep{Sivanandam:IntroductionGeneticAlgorithms:2008} provide a population-based approach to constructing heuristics for challenging optimization problems inspired by natural selection. The main building blocks of a genetic algorithm are the operators for implementing selection (based on the evaluation of a fitness function), crossover, and mutation that can be applied to individuals of a given population within an iterative evolutionary process.
	
	Here, we adapt the framework of genetic algorithms to the problem of computing the worst optimal value $\ubf$ of interval transportation problems. In our case, the population corresponds to the set of configurations $\chi$ representing quasi-extreme scenarios introduced in Section~\ref{sec:Alg}. The size of the population is determined by a parameter $N_\text{pop}$.
	
	Algorithm~\ref{alg:geneticAlg} presents the framework of the genetic algorithm designed to compute $\ubf$. The parameters $\pi_\text{C}$ and $\pi_\text{M}$ determine the probability of applying the crossover or mutation operator, respectively. Given a parameter $t_\text{GA}$, the termination criterion for the iterative evolutionary process is that no improvement in maximal fitness (optimal value of a scenario) was observed in the population in previous $t_\text{GA}$ iterations. At termination, the configuration with the highest optimal value found is returned. 
	
	\begin{algorithm}
		\begin{algorithmic}
			\State $P \gets $ Population of $N_\text{pop}$ randomly generated feasible configurations from $\chi$
			\State worstVal $\gets -\infty$
			\Repeat
            \State $Q \gets \emptyset$
			\For{$a \in P$}
			\State Evaluate fitness $f(a)$
			\If{$f(a) >$ worstVal}
			\State worstVal $\gets f(a)$
			\State worstConf $\gets a$
			\EndIf
			\EndFor
            \For{$l \in \{1, \dots, N_{\text{pop}}\}$}
			\State Choose $a \in P$ to copy to $Q$ according to the selection strategy
			\EndFor
			\State Split $Q$ into $\abs{Q}/2$ pairs
			\For{each created pair $(a, b)$ from $Q$}
			\State Insert crossover($a$, $b$) into $Q$ with probability $\pi_\text{C}$
			\EndFor
			\For{$a \in Q$}
			\State Replace $a$ with mutate($a$) with probability $\pi_\text{M}$
			\EndFor
			\State $P \gets Q$
			\Until no change of worstVal occurs for $t_\text{GA}$ iterations\\
			\Return worstConf
		\end{algorithmic}
		\caption{A genetic algorithm for computing $\ubf$ of \eqref{eq:itp:unbalanced}}\label{alg:geneticAlg}
	\end{algorithm}

    \subsubsection{Fitness and Selection}\label{ssec:genetic:fitness}
	The fitness of an individual configuration $a \in \chi$ is determined by the optimal value $f(a) = f(\ub{C},s,d)$ of the transportation problem represented by the configuration. If a given configuration corresponds to an infeasible scenario, it is assigned the optimal value of a feasible scenario with a similar structure (e.g. made by iteratively switching the values of supplies from the lower bound to the upper bound and inversely for the values of demands, until a feasible scenario is reached).
    
    We consider two selection policies, in which $N_{\text{pop}}$ configurations from the current population are chosen to advance to the following generation. The algorithm uses:
    \begin{itemize}
    \item a fitness proportionate selection strategy, in which each configuration $a$ has a given probability depending on $f(a)$ of being selected, or,
    \item a tournament selection strategy, in which a fixed number of configurations are chosen and the configuration with the highest fitness value $f(a)$ among them is selected.
    \end{itemize}
    Additionally, a fixed number of configurations with the highest fitness value $f(a)$ can be selected to be preserved in advance. 

    For the fitness proportionate selection strategy, the following functions can be used to compute the probability of selection for a configuration $a$:
   \[
        \pi_S^1(a) = \frac{f(a)}{\sum_{b \in P} f(b)}, \quad \pi_S^2(a) = \frac{f(a)-f_{\min}}{\sum_{b \in P} f(b) - \abs{P}\cdot f_{\min}}, \quad \pi_S^3(a) = \frac{f(a)-g}{\sum_{b \in P} f(b) - \abs{P} g},
   \]
    where $f_{\min}$ denotes the smallest value of fitness in the population, and the value $g$ is calculated such that the ratio of the highest and the lowest value of $\pi_S^3$ is constant.
    
    \subsubsection{Mutation}

    Given a configuration $a \in \chi$ with $a_k = 0$, the operator mutate($a$) produces a new configuration $a'$ by either switching a coefficient of $a$ or by changing the index of the free value, depending on whether the corresponding scenario of the transportation problem is balanced.

    For a configuration corresponding to an unbalanced scenario, the mutation operator randomly chooses an index $i \in \{1, \dots, m+n\} \setminus \{k\}$ and sets $a'_i = -a_i$, while preserving the remaining coefficients.

    For a balanced scenario, the mutation operator attempts to change the index of the free value. Here, we set $a'_k \in \{-1, 1\}$, preferring the value that can lead to a balanced scenario after selecting a suitable index of the new free value. If creating a new balanced scenario is possible, we set $a'_i = 0$ for the suitable index $i$ balancing the change in $a_k$, while preserving the remaining coefficients. 
    
    If neither of the choices $a'_k \in \{-1, 1\}$ allows a balanced scenario, then there exists a coefficient $a_i$ such that setting $a'_k = 0$, $a'_i = -a_i$ and $a'_j = a_j$ for all $j \notin \{i, k\}$ corresponds to a balanced scenario. In this case, the mutated configuration $a'$ is defined in this way.

    Note that while the mutation operator can produce a configuration corresponding to an unbalanced or infeasible scenario, a balanced scenario is always mutated into a new balanced scenario.
    
    \subsubsection{Crossover}
    
	The operator crossover($a$, $b$) combines two parent configurations $a, b \in \chi$ to produce an offspring configuration $z \in \chi$. Here, each value of $z$ is inherited from $a$ or from $b$. Moreover, the position of the free value is also inherited from one of the parents. Assume that the free values of the parents are $a_k = 0$ and $b_l = 0$. Then, the offspring configuration $z$ is created as follows: 
	\begin{itemize}
		\item  Set randomly either $z_k = 0$ and $z_l = a_l$, or $z_l = 0$ and $z_k = b_k$.
		\item For each $i \neq k, l$, set randomly either $z_i = a_i$ or $z_i = b_i$.
	\end{itemize}

    Note that the crossover operator can generate configurations corresponding to unbalanced and infeasible scenarios, even if both parents were configurations corresponding to balanced scenarios.
	
	\subsection{Memetic Algorithm}\label{ssec:Alg:Memetic}
	Hybridized genetic algorithms that combine the framework of genetic algorithms with other heuristic approaches have also been proposed and studied in the literature. The idea of improving the evolutionary population-based approach with individual learning has led to the combination of genetic algorithms with local search methods, known in the literature as memetic algorithms.
	
	Algorithm~\ref{alg:memeticAlg} shows the framework of a memetic algorithm to compute the worst finite optimal value of the interval transportation problems, which is based on the genetic algorithm derived in Section~\ref{ssec:Alg:Genetic}. However, here the local search approach discussed in Section~\ref{ssec:Alg:LocalSearch} is also used, to improve the quality of selected individual configurations in the population (with the probability of selection determined by a parameter $\pi_\text{LS}$). 
	
	Individual learning through local search is used for the initial population, as well as for the offspring configurations created by the crossover operator, using the first improvement policy (see also Algorithm~\ref{alg:localSearch:first} and Section~\ref{ssec:Alg:LocalSearch} for details). In order to keep the computation time reasonable, the function LocalSearch($a$) in Algorithm~\ref{alg:memeticAlg} can also be limited to return the configuration found after at most $t_{LS}$ iterations of the local search algorithm.
	
	\begin{algorithm}
		\begin{algorithmic}
			\State $P \gets $ Population of $N_\text{pop}$ randomly generated configurations from $\chi$
			\For{$a \in P$}
			\State Replace $a$ in $P$ with LocalSearch($a$) with probability $\pi_{LS}$
			\EndFor
			\State worstVal $\gets -\infty$
			\Repeat
            \State $Q \gets \emptyset$
			\For{$a \in P$}
			\State Evaluate fitness $f(a)$
			\If{$f(a) >$ worstVal}
			\State worstVal $\gets f(a)$
			\State worstConf $\gets a$
			\EndIf
			\EndFor
			\For{$l \in \{1, \dots, N_{\text{pop}}\}$}
			\State Choose $a \in P$ to copy to $Q$ according to the selection strategy
			\EndFor
			\State Split $Q$ into $\abs{Q}/2$ pairs
			\For{each created pair $(a, b)$ from $Q$}
			\State Insert $z$ := crossover($a$, $b$) into $Q$ with probability $\pi_\text{C}$
			\If{$z$ was inserted into $Q$}
			\State Replace $z$ in $Q$ with LocalSearch($z$) with probability $\pi_\text{LS}$
			\EndIf
			\EndFor
			\For{$a \in Q$}
			\State Replace $a$ with mutate($a$) with probability $\pi_\text{M}$
			\EndFor
			\State $P \gets Q$
			\Until no change of worstVal occurs for $t_\text{GA}$ iterations\\
			\Return worstConf
		\end{algorithmic}
		\caption{A memetic algorithm for computing $\ubf$ of \eqref{eq:itp:unbalanced}}\label{alg:memeticAlg}
	\end{algorithm}
	
	\section{Computational Experiments}\label{sec:Experiment}
    We performed computational experiments on a collection of interval transportation problem instances to compare the computational efficiency and solution quality of existing approaches with those of the algorithms presented in this paper.
    
    \subsection{Implementation Details}
    The proposed methods were implemented in Python, using the Gurobi 12.0.1 solver for evaluating the scenarios and computing the fitness function. The source code of the implementation is available on GitHub \citep{Rada:Github:2025}.

    The state-of-the-art methods for approximating the worst finite optimal value of an interval transportation problem that are considered in the experiment are the permutation heuristic genetic algorithm by \cite{Xie:UpperBoundMinimal:2017}, the iterated local search algorithm by \cite{Cerulli:BestWorstValues:2017} and the mixed-integer linear programming formulation by \cite{Garajova:IntervalTransportationProblem:2023}. For the former two methods, the results previously published in the literature are used for reference, while the last model is solved using Gurobi 12.0.1.

    For the comparison of the different methods, the memetic algorithm was used with the tournament selection strategy and the parameters were set empirically as follows:
    \begin{itemize}
    \item size of the population $N_{\text{pop}} = 30$,
    \item number of non-improved iterations to terminate $t_{\text{GA}} = 20$,
    \item probability of local search $\pi_{\text{LS}} = 0.7$,
    \item no limit on the number of local search iterations $t_{\text{LS}}$,
    \item probability of mutation $\pi_{\text{M}} = 0.1$ for configurations corresponding to the balanced scenarios and $\pi_{\text{M}} = 0.7$ for the unbalanced scenarios,
    \item probability of crossover $\pi_{\text{C}} = 1$.
    \end{itemize}

    Complete specification of the parameter settings used in the experiments is available in the source code in the GitHub repository.

    The experiment was carried out on a computer with a 32 GB RAM and an Intel Core i7-1185G7 processor.

    \subsection{Instances}
    The dataset of interval transportation problem instances used in the experiments is a standard collection of problems introduced by \cite{Xie:UpperBoundMinimal:2017} and subsequently used by other authors \citep{Cerulli:BestWorstValues:2017, Garajova:IntervalTransportationProblem:2023}. The dataset originally contains 60 randomly generated benchmark instances split into groups of $10$ instances of each size $2 \times 3$, $3 \times 5$, $4 \times 6$, $5 \times 10$, $10 \times 10$ and $20 \times 20$ (here, the size refers to the number of sources and destinations). In all of these instances, the supply and demand interval vectors satisfy $\ub{s} = 2\lb{s}$ and $\ub{d} = 2 \lb{d}$.
    In this experiment, only the largest instances of size $20 \times 20$ are used, since the smaller instances can be efficiently solved by exact algorithms in negligible time (see also the results published by \cite{Garajova:IntervalTransportationProblem:2023}).

    The experiments with larger randomly generated instances of sizes $40\times40$ and $60\times60$ \citep[available on GitHub]{Rada:Github:2025} did not prove fruitful. The best solutions found are the same for all the compared algorithms and are structurally simple (most or even all supplies and demands are set to their upper bounds). These solutions are likely to be found by simple local search. There remains a tempting open auxiliary problem: how to generate ``hard'' instances with nontrivial structure of maximizers.

    \subsection{Results and Discussion}

    \subsubsection{Comparison of the proposed methods}
    Table~\ref{tab:exp:memetics} shows the results of the computational experiment when solving the instances with the four algorithms proposed in this paper: the local search algorithm with the first improvement policy (Alg.~\ref{alg:localSearch:first}) and with the best improvement policy (Alg.~\ref{alg:localSearch:best}), the genetic algorithm (Alg.~\ref{alg:geneticAlg}) and the hybrid memetic algorithm (Alg.~\ref{alg:memeticAlg}). For each of the methods, the objective value providing a lower bound on the worst finite optimal value of the interval transportation problem is reported, as well as the running time (in seconds) required to compute the value. 
    
    Note that while the local search algorithm has the fastest running time, it was not always able to find the highest objective value. On the other hand, the genetic algorithm generally found better solutions, but required a significantly longer running time. Finally, the memetic algorithm combines the strengths of both methods and strikes a balance between producing high-quality solutions (thanks to the population-based approach) and keeping the computation time reasonable (requiring less generations thanks to local search).

     Since the implemented methods are not deterministic, we also studied the variations in the obtained solutions and running time over several runs. Figure~\ref{fig:exp:intplot:obj} shows an interval plot of the minimal, maximal and average objective value obtained over $5$ runs of each of the algorithms \ref{alg:localSearch:first}, \ref{alg:localSearch:best}, \ref{alg:geneticAlg} and \ref{alg:memeticAlg}. We can see that the variation in the attained objective values is the highest for the local search methods, which depend highly on the randomly generated starting configuration, whereas the population-based methods are more stable. Similarly, Figure~\ref{fig:exp:intplot:time} shows the minimal, maximal and average computation times over the $5$ runs. Here, the plots highlight the efficiency of the local search methods, while the running time varies more for the population-based methods. The genetic algorithm requires the longest time to produce the solution in most cases, however, the memetic algorithm was able to find the solutions in all runs under $10$ seconds.

        \newcolumntype{K}[0]{>{\centering\arraybackslash}p{55pt}}
    \newcolumntype{M}[0]{>{\centering\arraybackslash}p{15pt}}

    \begin{table}[b]
    \begin{tabular}{cc@{\hspace{3pt}}cc@{\hspace{3pt}}cc@{\hspace{3pt}}cc@{\hspace{3pt}}cc@{\hspace{3pt}}c}
    \hline
    & \multicolumn{2}{c}{Local search (Alg. \ref{alg:localSearch:first})} & \multicolumn{2}{c}{Local search (Alg. \ref{alg:localSearch:best})} & \multicolumn{2}{c}{Genetic alg. (Alg. \ref{alg:geneticAlg})} & \multicolumn{2}{c}{Memetic alg. (Alg. \ref{alg:memeticAlg})} \\ 
& Obj. value & Time &		Obj. value & Time &		Obj. value & Time &		Obj. value & Time \\ \hline
1 & 9330	&	0.11	&	8315	&	0.03	&	\bf 9425	&	16.86	&	\bf 9425	&	3.25 \\
2 & 9080	&	0.08	&	8060	&	0.03	&	\bf 9200	&	20.92	&	\bf 9200	&	2.86 \\
3 & 9330	&	0.11	&	8315	&	0.03	&	\bf 9425	&	15.07	&	\bf 9425	&	3.11 \\
4 & \bf 9130	&	0.08	&	7785	&	0.03	&	\bf 9130	&	22.61	&	\bf 9130	&	1.53 \\
5 & \bf 9420	&	0.10	&	7500	&	0.02	&	\bf 9420	&	24.90	&	\bf 9420	&	2.76 \\
6 & 10245	&	0.12	&	8380	&	0.02	&	\bf 10320	&	18.95	&	\bf 10320	&	1.08 \\
7 & 8675	&	0.06	&	7565	&	0.02	&	8675	&	21.57	&	\bf 8700	&	3.15 \\
8 & \bf 9260	&	0.09	&	7575	&	0.03	&	\bf 9260	&	29.56	&	\bf 9260	&	1.40 \\
9 & \bf 9885	&	0.10 &	8180	&	0.02	&	\bf 9885	&	20.86	&	\bf 9885	&	1.23 \\
10 & \bf 9370	&	0.09	&	7425	&	0.03	&	9350	&	15.14	&	\bf 9370	&	3.29 \\
\hline
    \end{tabular}
    \caption{The worst optimal value found by the local search algorithm (with the first-improvement and the best-improvement policy), the genetic algorithm and the memetic algorithm proposed in this paper, for the $20 \times 20$ instances from \cite{Xie:UpperBoundMinimal:2017}. The highest obtained value for each instance is marked in bold. The time (in seconds) required to find the solution by each method is also reported.}\label{tab:exp:memetics}
    \end{table}

\pgfplotstableread[col sep=space]{
instance Algorithm avg_obj min_obj max_obj avg_time min_time max_time draw
1	2		9119	8560	9330		0.108861923217773	0.019597053527832	0.167963266372681 1
1	3		7365	6795	8315		0.0290294647216797	0.0184903144836426	0.0443811416625977 2			
1	4		9250	9145	9425		16.85949010849	11.6849849224091	22.6391575336456 3		 	
1	5		9421	9405	9425		3.25462827682495	0.649504661560059	7.93840098381043 4

2	2		8990	8900	9080		0.0839365482330322	0.0619988441467285	0.101388454437256 5		
2	3		7268	6700	8060		0.028938627243042	0.0192856788635254	0.0401389598846436 6		
2	4		9168	9080	9200		20.9158733844757	15.1739628314972	25.9510290622711 7			
2	5		9196	9180	9200		2.86339631080627	1.23249316215515	4.95468878746033 8

3	2		9119	8560	9330		0.101899433135986	0.0191242694854736	0.133016109466553	 9		
3	3		7365	6795	8315		0.0268145084381104	0.0163931846618652	0.0416445732116699		10	
3	4		9250	9145	9425		15.0741301059723	8.48294973373413	19.1919207572937			11
3	5		9421	9405	9425		3.1095057964325	0.641295194625855	7.8110044002533			12

4	2		8777	8435	9130		0.0755904674530029	0.0468418598175049	0.0966901779174805		13	
4	3		7252	6845	7785		0.0291649341583252	0.0236268043518066	0.0322070121765137			14
4	4		9130	9130	9130		22.6131054878235	15.7554161548615	26.8890945911407			15
4	5		9130	9130	9130		1.52703108787537	0.736182689666748	3.093177318573			16

5	2		8943	7430	9420		0.0967044353485107	0.0283973217010498	0.167765140533447			17
5	3		6827	6150	7500		0.0232756614685059	0.0170390605926514	0.03243088722229			18
5	4		9351	9190	9420		24.8959182262421	18.6146204471588	33.7956140041351			19
5	5		9420	9420	9420		2.76172499656677	0.611107110977173	6.17563247680664			20

6	2		10291	10245	10320		0.120388221740723	0.109599351882935	0.136763095855713			21
6	3		7822	6925	8380		0.0234377384185791	0.016284704208374	0.0285882949829102			22
6	4		10320	10320	10320		18.948573923111	15.0167415142059	21.7831890583038			23
6	5		10320	10320	10320		1.08108611106873	0.76025915145874	1.79930543899536		24

7	2		8271	7715	8675		0.06091628074646	0.0212035179138184	0.0971536636352539		25	
7	3		7131	6350	7565		0.0211897850036621	0.017838716506958	0.0240697860717773		26	
7	4		8675	8675	8675		21.5703828334808	16.0411522388458	24.0196805000305		27	
7	5		8700	8700	8700		3.15477337837219	0.795957803726196	5.09900569915772		28

8	2		8662	7550	9260		0.0910737991333008	0.0171384811401367	0.165892839431763			29
8	3		6649	5745	7575		0.026797342300415	0.0159423351287842	0.0401830673217773			30
8	4		9260	9260	9260		29.5595542430878	16.1216428279877	43.2323489189148			31
8	5		9260	9260	9260		1.40193791389465	0.880597591400147	1.79532957077026			32

9	2		9823	9575	9885		0.0952373504638672	0.0662455558776856	0.14861011505127			33
9	3		7820	7510	8180		0.0245130062103272	0.0170173645019531	0.0318007469177246			34
9	4		9885	9885	9885		20.8644217014313	15.9952385425568	29.9151105880737			35
9	5		9885	9885	9885		1.22529759407043	0.64405083656311	2.26242089271545			36

10	2		9011	7815	9370		0.0948210716247559	0.0291793346405029	0.133568525314331			37
10	3		6720	6195	7425		0.0330136775970459	0.017686128616333	0.0645005702972412			38
10	4		9276	9175	9350		15.1416352748871	10.234747171402	18.9932994842529			39
10	5		9370	9370	9370		5.4245876789093	3.28995943069458	9.7558376789093			40

}\datatable

\pgfplotstablecreatecol[
    create col/expr={\thisrow{avg_obj} - \thisrow{min_obj}}
]{obj_err_low}{\datatable}

\pgfplotstablecreatecol[
    create col/expr={\thisrow{max_obj} - \thisrow{avg_obj}}
]{obj_err_high}{\datatable}

\pgfplotstablecreatecol[
    create col/expr={\thisrow{avg_time} - \thisrow{min_time}}
]{time_err_low}{\datatable}

\pgfplotstablecreatecol[
    create col/expr={\thisrow{max_time} - \thisrow{avg_time}}
]{time_err_high}{\datatable}

\begin{figure}[h]
\centering
\begin{tikzpicture}
\begin{axis}[
    xtick={1,...,20},
    xticklabels={2,3,4,5,2,3,4,5,2,3,4,5,2,3,4,5,2,3,4,5},
    grid=both,
    grid style=dashed,
    width=12.5cm,
    height=5cm,
    scaled y ticks=false
]

\addplot+[
    only marks,
    error bars/.cd,
        y dir=both,
        y explicit,
]
table[
    x=draw,
    y=avg_obj,
    y error minus=obj_err_low,
    y error plus=obj_err_high,
    restrict expr to domain={\thisrow{instance}}{1:1},
]{\datatable};

\addplot+[
    only marks,
    error bars/.cd,
        y dir=both,
        y explicit,
]
table[
    x=draw,
    y=avg_obj,
    y error minus=obj_err_low,
    y error plus=obj_err_high,
    restrict expr to domain={\thisrow{instance}}{2:2},
]{\datatable};

\addplot+[
    only marks,
    error bars/.cd,
        y dir=both,
        y explicit,
]
table[
    x=draw,
    y=avg_obj,
    y error minus=obj_err_low,
    y error plus=obj_err_high,
    restrict expr to domain={\thisrow{instance}}{3:3},
]{\datatable};

\addplot+[
    only marks,
    error bars/.cd,
        y dir=both,
        y explicit,
]
table[
    x=draw,
    y=avg_obj,
    y error minus=obj_err_low,
    y error plus=obj_err_high,
    restrict expr to domain={\thisrow{instance}}{4:4},
]{\datatable};

\addplot+[
    only marks,
    error bars/.cd,
        y dir=both,
        y explicit,
]
table[
    x=draw,
    y=avg_obj,
    y error minus=obj_err_low,
    y error plus=obj_err_high,
    restrict expr to domain={\thisrow{instance}}{5:5},
]{\datatable};
\end{axis}
\end{tikzpicture}
\begin{tikzpicture}
\begin{axis}[
    xtick={21,...,40},
    xticklabels={2,3,4,5,2,3,4,5,2,3,4,5,2,3,4,5,2,3,4,5},
    grid=both,
    grid style=dashed,
     width=12.5cm,
    height=5cm,
    scaled y ticks=false
]
\addplot+[
    only marks,
    error bars/.cd,
        y dir=both,
        y explicit,
]
table[
    x = draw,
    y=avg_obj,
    y error minus=obj_err_low,
    y error plus=obj_err_high,
    restrict expr to domain={\thisrow{instance}}{6:6},
]{\datatable};

\addplot+[
    only marks,
    error bars/.cd,
        y dir=both,
        y explicit,
]
table[
    x = draw,
    y=avg_obj,
    y error minus=obj_err_low,
    y error plus=obj_err_high,
    restrict expr to domain={\thisrow{instance}}{7:7},
]{\datatable};

\addplot+[
    only marks,
    error bars/.cd,
        y dir=both,
        y explicit,
]
table[
    x = draw,
    y=avg_obj,
    y error minus=obj_err_low,
    y error plus=obj_err_high,
    restrict expr to domain={\thisrow{instance}}{8:8},
]{\datatable};

\addplot+[
    only marks,
    error bars/.cd,
        y dir=both,
        y explicit,
]
table[
    x = draw,
    y=avg_obj,
    y error minus=obj_err_low,
    y error plus=obj_err_high,
    restrict expr to domain={\thisrow{instance}}{9:9},
]{\datatable};

\addplot+[
    only marks,
    error bars/.cd,
        y dir=both,
        y explicit,
]
table[
    x = draw,
    y=avg_obj,
    y error minus=obj_err_low,
    y error plus=obj_err_high,
    restrict expr to domain={\thisrow{instance}}{10:10},
]{\datatable};
\end{axis}
\end{tikzpicture}
\caption{An interval plot of the maximal, minimal and average objective values attained by the four algorithms presented in this paper over $5$ runs for instances $1$-$5$ (above) and $6$-$10$ (below). The labels indicate the number of the algorithm used: local search (\ref{alg:localSearch:first} and \ref{alg:localSearch:best}), the genetic algorithm (\ref{alg:geneticAlg}) and the memetic algorithm (\ref{alg:memeticAlg}).}\label{fig:exp:intplot:obj}
\end{figure}
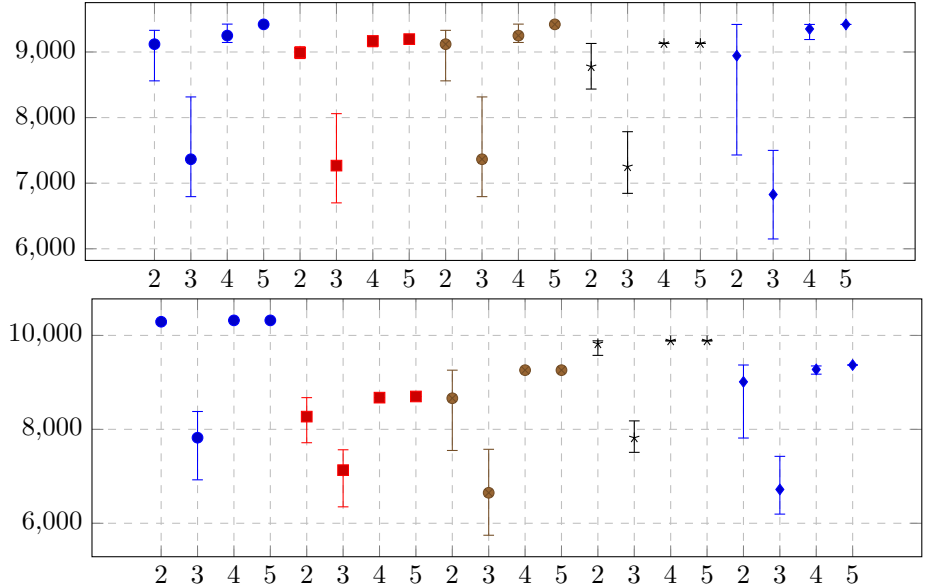

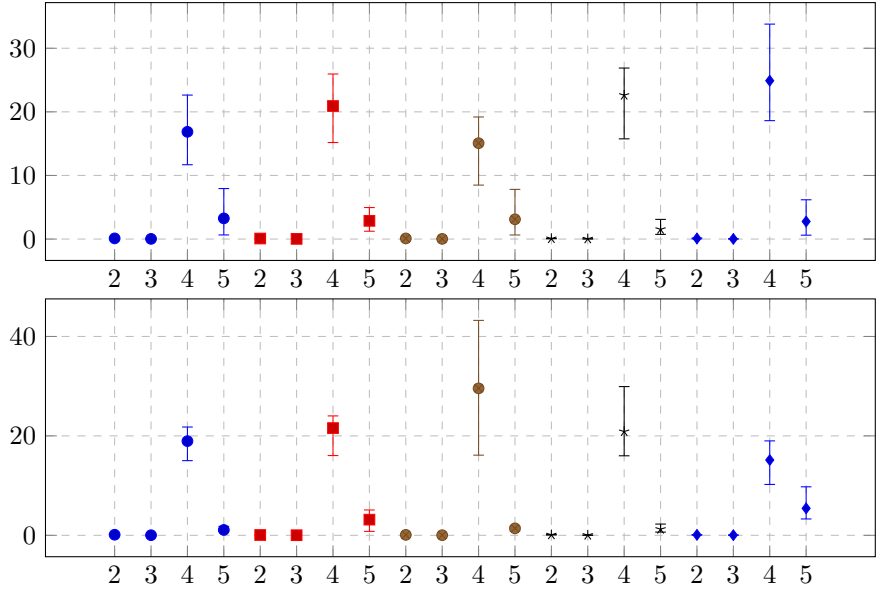
\begin{figure}[h]
\centering
\begin{tikzpicture}
\begin{axis}[
    xtick={1,...,20},
    xticklabels={2,3,4,5,2,3,4,5,2,3,4,5,2,3,4,5,2,3,4,5},
    grid=both,
    grid style=dashed,
    width=12.5cm,
    height=5cm,
    scaled y ticks=false
]

\addplot+[
    only marks,
    error bars/.cd,
        y dir=both,
        y explicit,
]
table[
    x=draw,
    y=avg_time,
    y error minus=time_err_low,
    y error plus=time_err_high,
    restrict expr to domain={\thisrow{instance}}{1:1},
]{\datatable};

\addplot+[
    only marks,
    error bars/.cd,
        y dir=both,
        y explicit,
]
table[
    x=draw,
    y=avg_time,
    y error minus=time_err_low,
    y error plus=time_err_high,
    restrict expr to domain={\thisrow{instance}}{2:2},
]{\datatable};

\addplot+[
    only marks,
    error bars/.cd,
        y dir=both,
        y explicit,
]
table[
    x=draw,
    y=avg_time,
    y error minus=time_err_low,
    y error plus=time_err_high,
    restrict expr to domain={\thisrow{instance}}{3:3},
]{\datatable};

\addplot+[
    only marks,
    error bars/.cd,
        y dir=both,
        y explicit,
]
table[
    x=draw,
    y=avg_time,
    y error minus=time_err_low,
    y error plus=time_err_high,
    restrict expr to domain={\thisrow{instance}}{4:4},
]{\datatable};

\addplot+[
    only marks,
    error bars/.cd,
        y dir=both,
        y explicit,
]
table[
    x=draw,
    y=avg_time,
    y error minus=time_err_low,
    y error plus=time_err_high,
    restrict expr to domain={\thisrow{instance}}{5:5},
]{\datatable};
\end{axis}
\end{tikzpicture}
\begin{tikzpicture}
\begin{axis}[
    xtick={21,...,40},
    xticklabels={2,3,4,5,2,3,4,5,2,3,4,5,2,3,4,5,2,3,4,5},
    grid=both,
    grid style=dashed,
     width=12.5cm,
    height=5cm,
    scaled y ticks=false
]
\addplot+[
    only marks,
    error bars/.cd,
        y dir=both,
        y explicit,
]
table[
    x = draw,
    y=avg_time,
    y error minus=time_err_low,
    y error plus=time_err_high,
    restrict expr to domain={\thisrow{instance}}{6:6},
]{\datatable};

\addplot+[
    only marks,
    error bars/.cd,
        y dir=both,
        y explicit,
]
table[
    x = draw,
    y=avg_time,
    y error minus=time_err_low,
    y error plus=time_err_high,
    restrict expr to domain={\thisrow{instance}}{7:7},
]{\datatable};

\addplot+[
    only marks,
    error bars/.cd,
        y dir=both,
        y explicit,
]
table[
    x = draw,
    y=avg_time,
    y error minus=time_err_low,
    y error plus=time_err_high,
    restrict expr to domain={\thisrow{instance}}{8:8},
]{\datatable};

\addplot+[
    only marks,
    error bars/.cd,
        y dir=both,
        y explicit,
]
table[
    x = draw,
    y=avg_time,
    y error minus=time_err_low,
    y error plus=time_err_high,
    restrict expr to domain={\thisrow{instance}}{9:9},
]{\datatable};

\addplot+[
    only marks,
    error bars/.cd,
        y dir=both,
        y explicit,
]
table[
    x = draw,
    y=avg_time,
    y error minus=time_err_low,
    y error plus=time_err_high,
    restrict expr to domain={\thisrow{instance}}{10:10},
]{\datatable};
\end{axis}
\end{tikzpicture}
\caption{An interval plot of the maximal, minimal and average running time of the four algorithms presented in this paper over $5$ runs for instances $1$-$5$ (above) and $6$-$10$ (below). The labels indicate the number of the algorithm used: local search (\ref{alg:localSearch:first} and \ref{alg:localSearch:best}), the genetic algorithm (\ref{alg:geneticAlg}) and the memetic algorithm (\ref{alg:memeticAlg}).}\label{fig:exp:intplot:time}
\end{figure}

    \subsubsection{Comparison against the published methods}
    Table~\ref{tab:exp:all} presents the results of the computational experiment for the three previously published methods for approximating the worst finite optimal value of interval transportation problems and for the memetic algorithm proposed in this paper. The values for \cite{Xie:UpperBoundMinimal:2017} and \cite{Cerulli:BestWorstValues:2017} are based on the results previously published in the literature. Running times (in seconds) are reported for the mixed-integer linear programming model by \cite{Garajova:IntervalTransportationProblem:2023} solved with Gurobi (with a time limit of $1$ minute) and for the memetic algorithm implemented by the authors. 
    
    Note that the solutions found by the integer programming model are not necessarily optimal, since the solver was not able to solve the program to optimality within the time limit. In fact, in instances $9$ and $10$ the memetic algorithm was able to find a scenario with a higher optimal value than the value attained by Gurobi. For instances $2$ and $10$, the objective value of $9200$ and $9370$ returned by the memetic algorithm is even higher than the highest bound that was previously found by any of the competing algorithms. In all of the instances, the memetic algorithm was able to compute the tightest approximation of the worst optimal value in under $4$ seconds of running time.

    \begin{table}[b]
    \begin{tabular}{cKKKMKM}
    \hline
    & \cite{Xie:UpperBoundMinimal:2017} & \cite{Cerulli:BestWorstValues:2017} & \cite{Garajova:IntervalTransportationProblem:2023} & Time & Memetic alg. (Alg. \ref{alg:memeticAlg}) & Time \\ \hline
1	&	9405	&	9405	&	\bf 9425	&	8	&	\bf 9425	&	3.25 \\
2	&	9015	&	9140	&	\bf 9200	&	59	&	\bf 9200	&	2.86 \\
3	&	9335	&	9405	&	\bf 9425	&	8	&	\bf 9425	&	3.11 \\
4	&	8930	&	\bf 9130	&	\bf 9130	&	12	&	\bf 9130	&	1.53 \\
5	&	9275	&	\bf 9420	&	\bf 9420	&	1	&	\bf 9420	&	2.76 \\
6	&	10220	&	\bf 10320	&	\bf 10320	&	8	&	\bf 10320	&	1.08 \\
7	&	8685	&	8630	&	\bf 8700	&	5	&	\bf 8700	&	3.15 \\
8	&	\bf 9260	&	\bf 9260	&	\bf 9260	&	0	&	\bf 9260	&	1.40 \\
9	&	\bf 9885	&	\bf 9885	&	9685	&	2	&	\bf 9885	&	1.23 \\
10	&	9225	&	9220	&	9200	&	59	&	\bf 9370	&	3.29 \\
\hline
    \end{tabular}
    \caption{The worst optimal value found by the four compared algorithms for the $20 \times 20$ instances from \cite{Xie:UpperBoundMinimal:2017}. The highest obtained value for each instance is marked in bold. The time (in seconds) required to find the solution by the MILP model and the memetic algorithm is also reported in the respective columns. The results for the MILP model \citep{Garajova:IntervalTransportationProblem:2023} are recomputed using the newer version of the Gurobi solver (differences are for instances 2 and 10 (better solutions this time) and 9 (a worse solution this time).}\label{tab:exp:all}
    \end{table}

    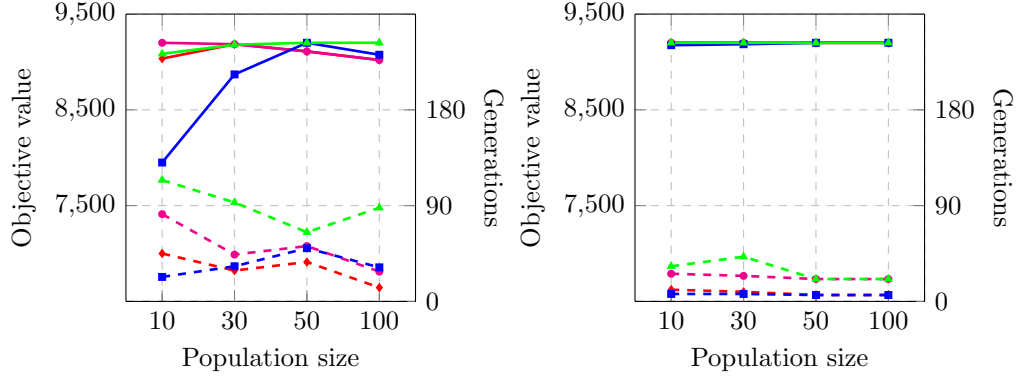
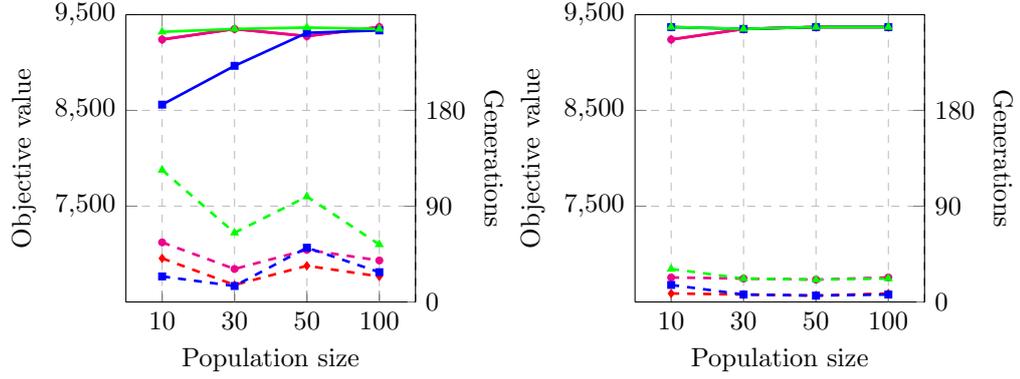
\begin{figure}[h!]\centering
    \begin{subfigure}{\textwidth}
    \begin{tikzpicture}

\begin{axis}[%
grid=major,grid style={dashed},
width=1.5in,
height=1.5in,
scale only axis,
xmin=0.5,
xmax=4.5,
separate axis lines,
every outer y axis line/.append style={black},
every y tick label/.append style={font=\color{black}},
ymin=6500,
ymax=9500,
ytick       ={    7500,  8500, 9500},
mark size=1pt,
ylabel      ={Objective value},
xtick       ={1,2,3,4},
xticklabels ={10,30,50,100},
xlabel={Population size},
line width=1pt,
axis line style={line width=0.4pt},
]
\addplot [
color=red,
mark=diamond*,
solid,
]
table[row sep=crcr]{
1	9035	\\
2	9185	\\
3	9110	\\
4	9020	\\
}; \label{plot:2:linshift:5}
\addplot [
color=magenta,
mark=*,
solid,
]
table[row sep=crcr]{
1	9200	\\
2	9185	\\
3	9110	\\
4	9020	\\
}; \label{plot:2:linshift:20}
\addplot [
color=blue,
mark=square*,
solid,
]
table[row sep=crcr]{
1	7950	\\
2	8870	\\
3	9200	\\
4	9075	\\
}; \label{plot:2:tournament:5}
\addplot [
color=green,
mark=triangle*,
solid,
]
table[row sep=crcr]{
1	9085	\\
2	9180	\\
3	9200	\\
4	9200	\\
}; \label{plot:2:tournament:20}
\end{axis}

\begin{axis}[%
width=1.5in,
height=1.5in,
scale only axis,
dashed,
axis line style={line width=0pt},
xmin=0.5,
xmax=4.5,
every outer y axis line/.append style={black},
every y tick label/.append style={font=\color{black}},
ymin=0,
ymax=270,
line width=1pt,
mark options={solid},
mark size=1pt,
ytick={   0, 90,  180},
axis x line*=bottom,
axis y line*=right,
xtick={},
xticklabels={},
ylabel={Generations},
ylabel style={rotate=180},
]
\addplot [
color=red,
mark=diamond*,
forget plot
]
table[row sep=crcr]{
1	45	\\
2	29	\\
3	37	\\
4	13	\\
};

\addplot [
color=magenta,
mark=*,
forget plot
]
table[row sep=crcr]{
1	82	\\
2	44	\\
3	52	\\
4	28	\\
};

\addplot [
color=blue,
mark=square*,
forget plot
]
table[row sep=crcr]{
1	23	\\
2	33	\\
3	50	\\
4	32	\\
}; 

\addplot [
color=green,
mark=triangle*,
forget plot
]
table[row sep=crcr]{
1	114	\\
2	93	\\
3	65	\\
4	88	\\
}; 
\end{axis}
\end{tikzpicture}%
\begin{tikzpicture}

\begin{axis}[%
grid=major,grid style={dashed},
width=1.5in,
height=1.5in,
scale only axis,
xmin=0.5,
xmax=4.5,
separate axis lines,
every outer y axis line/.append style={black},
every y tick label/.append style={font=\color{black}},
ymin=6500,
ymax=9500,
ytick       ={    7500,  8500, 9500},
mark size=1pt,
ylabel      ={Objective value},
xtick       ={1,2,3,4},
xticklabels ={10,30,50,100},
xlabel={Population size},
line width=1pt,
axis line style={line width=0.4pt},
]
\addplot [
color=red,
mark=diamond*,
solid,
]
table[row sep=crcr]{
1	9200	\\
2	9200	\\
3	9200	\\
4	9200	\\
}; 
\addplot [
color=magenta,
mark=*,
solid,
]
table[row sep=crcr]{
1	9200	\\
2	9200	\\
3	9200	\\
4	9200	\\
}; 
\addplot [
color=blue,
mark=square*,
solid,
]
table[row sep=crcr]{
1	9175	\\
2	9185	\\
3	9200	\\
4	9200	\\
}; 
\addplot [
color=green,
mark=triangle*,
solid,
]
table[row sep=crcr]{
1	9200	\\
2	9200	\\
3	9200	\\
4	9200	\\
}; 

\end{axis}

\begin{axis}[%
width=1.5in,
height=1.5in,
scale only axis,
dashed,
axis line style={line width=0pt},
xmin=0.5,
xmax=4.5,
every outer y axis line/.append style={black},
every y tick label/.append style={font=\color{black}},
ymin=0,
ymax=270,
line width=1pt,
mark options={solid},
mark size=1pt,
ytick={   0, 90,  180},
ylabel={Generations},
axis x line*=bottom,
axis y line*=right,
xtick={},
xticklabels={},
ylabel style={rotate=180}
]
\addplot [
color=red,
mark=diamond*,
forget plot
]
table[row sep=crcr]{
1	11	\\
2	9	\\
3	6	\\
4	6	\\
}; 

\addplot [
color=magenta,
mark=*,
forget plot
]
table[row sep=crcr]{
1	26	\\
2	24	\\
3	21	\\
4	21	\\
};

\addplot [
color=blue,
mark=square*,
forget plot
]
table[row sep=crcr]{
1	7	\\
2	7	\\
3	6	\\
4	6	\\
}; 

\addplot [
color=green,
mark=triangle*,
forget plot
]
table[row sep=crcr]{
1	33	\\
2	42	\\
3	21	\\
4	21	\\
}; 
\end{axis}
\end{tikzpicture}%
\caption{Instance 2}
\end{subfigure}
\begin{subfigure}{\textwidth}
\begin{tikzpicture}

\begin{axis}[%
grid=major,grid style={dashed},
width=1.5in,
height=1.5in,
scale only axis,
xmin=0.5,
xmax=4.5,
separate axis lines,
every outer y axis line/.append style={black},
every y tick label/.append style={font=\color{black}},
ymin=6500,
ymax=9500,
ytick       ={    7500,  8500, 9500},
mark size=1pt,
ylabel      ={Objective value},
xtick       ={1,2,3,4},
xticklabels ={10,30,50,100},
xlabel={Population size},
line width=1pt,
axis line style={line width=0.4pt},
]
\addplot [
color=red,
mark=diamond*,
solid,
]
table[row sep=crcr]{
1	9240	\\
2	9350	\\
3	9275	\\
4	9370	\\
}; 
\addplot [
color=magenta,
mark=*,
solid,
]
table[row sep=crcr]{
1	9240	\\
2	9350	\\
3	9275	\\
4	9370	\\
}; 
\addplot [
color=blue,
mark=square*,
solid,
]
table[row sep=crcr]{
1	8560	\\
2	8965	\\
3	9310	\\
4	9335	\\
}; 
\addplot [
color=green,
mark=triangle*,
solid,
]
table[row sep=crcr]{
1	9320	\\
2	9350	\\
3	9365	\\
4	9350	\\
}; 
\end{axis}

\begin{axis}[%
width=1.5in,
height=1.5in,
scale only axis,
dashed,
axis line style={line width=0pt},
xmin=0.5,
xmax=4.5,
every outer y axis line/.append style={black},
every y tick label/.append style={font=\color{black}},
ymin=0,
ymax=270,
line width=1pt,
mark options={solid},
mark size=1pt,
ytick={   0, 90,  180},
ylabel={Generations},
axis x line*=bottom,
axis y line*=right,
xtick={},
xticklabels={},
ylabel style={rotate=180}
]
\addplot [
color=red,
mark=diamond*,
forget plot
]
table[row sep=crcr]{
1	41	\\
2	16	\\
3	34	\\
4	24	\\
}; \label{areaplot}

\addplot [
color=magenta,
mark=*,
forget plot
]
table[row sep=crcr]{
1	56	\\
2	31	\\
3	49	\\
4	39	\\
}; \label{areaplot}

\addplot [
color=blue,
mark=square*,
forget plot
]
table[row sep=crcr]{
1	24	\\
2	15	\\
3	51	\\
4	28	\\
}; \label{areaplot}

\addplot [
color=green,
mark=triangle*,
forget plot
]
table[row sep=crcr]{
1	124	\\
2	65	\\
3	99	\\
4	54	\\
}; \label{areaplot}
\end{axis}
\end{tikzpicture}%
\begin{tikzpicture}

\begin{axis}[%
grid=major,grid style={dashed},
width=1.5in,
height=1.5in,
scale only axis,
xmin=0.5,
xmax=4.5,
separate axis lines,
every outer y axis line/.append style={black},
every y tick label/.append style={font=\color{black}},
ymin=6500,
ymax=9500,
ytick       ={    7500,  8500, 9500},
mark size=1pt,
ylabel      ={Objective value},
xtick       ={1,2,3,4},
xticklabels ={10,30,50,100},
xlabel={Population size},
line width=1pt,
axis line style={line width=0.4pt},
]
\addplot [
color=red,
mark=diamond*,
solid,
]
table[row sep=crcr]{
1	9240	\\
2	9350	\\
3	9370	\\
4	9370	\\
}; 
\addplot [
color=magenta,
mark=*,
solid,
]
table[row sep=crcr]{
1	9240	\\
2	9350	\\
3	9370	\\
4	9370	\\
}; 
\addplot [
color=blue,
mark=square*,
solid,
]
table[row sep=crcr]{
1	9370	\\
2	9350	\\
3	9370	\\
4	9370	\\
}; 
\addplot [
color=green,
mark=triangle*,
solid,
]
table[row sep=crcr]{
1	9370	\\
2	9350	\\
3	9370	\\
4	9370	\\
}; 

\label{displacementplot}
\end{axis}

\begin{axis}[%
width=1.5in,
height=1.5in,
scale only axis,
dashed,
axis line style={line width=0pt},
xmin=0.5,
xmax=4.5,
every outer y axis line/.append style={black},
every y tick label/.append style={font=\color{black}},
ymin=0,
ymax=270,
line width=1pt,
mark options={solid},
mark size=1pt,
ytick={   0, 90,  180},
ylabel={Generations},
axis x line*=bottom,
axis y line*=right,
xtick={},
xticklabels={},
ylabel style={rotate=180}
]
\addplot [
color=red,
mark=diamond*,
forget plot
]
table[row sep=crcr]{
1	8	\\
2	7	\\
3	6	\\
4	8	\\
}; \label{areaplot}

\addplot [
color=magenta,
mark=*,
forget plot
]
table[row sep=crcr]{
1	23	\\
2	22	\\
3	21	\\
4	23	\\
}; \label{areaplot}

\addplot [
color=blue,
mark=square*,
forget plot
]
table[row sep=crcr]{
1	16	\\
2	7	\\
3	6	\\
4	7	\\
}; \label{areaplot}

\addplot [
color=green,
mark=triangle*,
forget plot
]
table[row sep=crcr]{
1	31	\\
2	22	\\
3	21	\\
4	22	\\
}; \label{areaplot}
\end{axis}
\end{tikzpicture}%
\caption{Instance 10}
\end{subfigure}
    \caption{The objective value and the number of generations of the genetic algorithm (left) and the memetic algorithm (right) on two instances for population size $N_{\text{pop}} \in \{10,30,50,100\}$. Different settings of parameters are used: fitness proportionate selection with strategy $\pi_S^3$ and $t_{\text{GA}} = 5$ (\ref{plot:2:linshift:5}), fitness proportionate selection with strategy $\pi_S^3$ and $t_{\text{GA}} = 20$ (\ref{plot:2:linshift:20}), tournament selection with $t_{\text{GA}} = 5$ (\ref{plot:2:tournament:5}) and tournament selection with $t_{\text{GA}} = 20$ (\ref{plot:2:tournament:20}).}\label{fig:exp:gen}
    \end{figure}

  \subsubsection{Properties of the genetic algorithm}

    Figure~\ref{fig:exp:gen} illustrates the objective value obtained by the genetic and the memetic algorithm and the number of generations processed to compute the value on two instances (instance $2$ and instance $10$, respectively) for different initial population sizes and under various settings of the parameters. Namely, population size $N_\text{pop} \in \{10, 30, 50, 100\}$ is tested with either tournament selection or fitness proportionate selection using strategy $\pi_S^3$ (see Section~\ref{ssec:genetic:fitness} for details) and with the termination condition determined by the number of nonimproved iterations $t_\text{GA} \in \{5, 20\}$.
    
    We can see that the behavior of the genetic algorithm varies more with the population size (especially with the tournament selection strategy), while the memetic algorithm exhibits a more stable behavior. However, both of the population-based algorithms are able to produce high-quality solutions even with a small population size. In fact, when using tournament selection, the memetic algorithm was able to find the highest objective value of both instances even with $N_\text{pop} = 10$. 

    Figure~\ref{fig:exp:gen:progress} shows the progress of the genetic and the memetic algorithm when solving instance $2$ under two different settings: fitness proportionate selection using $\pi_S^3$ with $N_\text{pop} = 10$ and $t_\text{GA} = 20$ and tournament selection with $N_\text{pop} = 50$ and $t_\text{GA} = 5$. For this instance, all of the tested algorithms found the same highest value of $9200$. 
    
    The longest time to return the solution was observed for the genetic algorithm with $N_\text{pop} = 50$, which processed $50$ generations in about $27$ seconds. All of the remaining algorithms were able to compute the solution in under $8$ seconds. The genetic algorithm with $N_\text{pop} = 10$ processed $80$ generations, but since the population in each generation was $5$ times smaller, the iterations were much faster.

    The memetic algorithm was able to find the highest value quickly and spent most of the time waiting on the terminating condition for $t_\text{GA}$ iterations with this value. In this case, the number of generations was lower than for the genetic algorithm ($6$ and $26$), but the iterations were more demanding due to the inclusion of local search.

\begin{figure}[]
\begin{tikzpicture}[scale=0.9]

\begin{axis}[%
grid=major,grid style={dashed},
width=5in,
height=2in,
scale only axis,
xmin=0,
xmax=27.5,
separate axis lines,
every outer y axis line/.append style={black},
every y tick label/.append style={font=\color{black}},
ymin=6800,
ymax=9500,
ytick       ={    7500,  8500, 9500},
mark size=1pt,
ylabel      ={Objective value},
xtick       ={0,1,...,27},
xticklabels ={0,1,...,27},
xlabel={Time},
line width=1pt,
axis line style={line width=0.4pt},
]
\addplot [
color=red,
mark=diamond*,
solid,
]
table[row sep=crcr]{
0 6800 \\
0.092522	7010	\\
0.186437	7040	\\
0.279204	7575	\\
0.371959	7670	\\
0.464543	7820	\\
0.556570	7840	\\
0.649044	7930	\\
0.741353	7930	\\
0.833477	7950	\\
0.926039	8225	\\
1.019101	8225	\\
1.130646	8420	\\
1.232148	8420	\\
1.330330	8420	\\
1.440007	8460	\\
1.542689	8460	\\
1.654523	8500	\\
1.757582	8510	\\
1.858459	8530	\\
1.956721	8540	\\
2.057988	8595	\\
2.154844	8595	\\
2.252267	8605	\\
2.346057	8605	\\
2.490473	8605	\\
2.620367	8605	\\
2.730686	8630	\\
2.814110	8630	\\
2.904621	8660	\\
2.990024	8700	\\
3.074703	8700	\\
3.165223	8850	\\
3.259175	8850	\\
3.341642	8850	\\
3.432270	8855	\\
3.516654	8855	\\
3.616023	8885	\\
3.708613	8885	\\
3.801262	8945	\\
3.884998	9035	\\
3.968616	9035	\\
4.050951	9035	\\
4.132879	9035	\\
4.215125	9035	\\
4.303964	9035	\\
4.399253	9035	\\
4.489393	9035	\\
4.577390	9035	\\
4.670621	9090	\\
4.756597	9090	\\
4.846478	9090	\\
4.937992	9110	\\
5.032342	9110	\\
5.126170	9110	\\
5.226723	9110	\\
5.323079	9150	\\
5.425366	9195	\\
5.524732	9195	\\
5.628304	9195	\\
5.727762	9195	\\
5.834203	9195	\\
5.936046	9200	\\
6.035460	9200	\\
6.130842	9200	\\
6.228178	9200	\\
6.325043	9200	\\
6.422900	9200	\\
6.521228	9200	\\
6.620746	9200	\\
6.717820	9200	\\
6.813961	9200	\\
6.908972	9200	\\
7.003839	9200	\\
7.100595	9200	\\
7.196354	9200	\\
7.295783	9200	\\
7.392859	9200	\\
7.495229	9200	\\
7.598429	9200	\\
7.699309	9200	\\
7.794709	9200	\\
7.900644	9200	\\
}; \label{plot:progress:genetics:10:linear:20}
\addplot [
color=magenta,
mark=*,
solid,
]
table[row sep=crcr]{
0 6800 \\
0.295779	8940	\\
0.531831	9110	\\
0.716162	9185	\\
0.886475	9185	\\
1.055772	9185	\\
1.264498	9200	\\
1.468291	9200	\\
1.639659	9200	\\
1.845943	9200	\\
2.062498	9200	\\
2.278285	9200	\\
2.475368	9200	\\
2.649678	9200	\\
2.865865	9200	\\
3.055664	9200	\\
3.258637	9200	\\
3.414000	9200	\\
3.611689	9200	\\
3.795964	9200	\\
3.957141	9200	\\
4.156111	9200	\\
4.358616	9200	\\
4.545477	9200	\\
4.734426	9200	\\
4.929767	9200	\\
5.133550	9200	\\
}; \label{plot:progress:memetics:10:linear:20}
\addplot [
color=blue,
mark=square*,
solid,
]
table[row sep=crcr]{
0 6800 \\
0.512120	7610	\\
1.021178	7835	\\
1.511312	7835	\\
2.032502	8095	\\
2.580220	8120	\\
3.167374	8130	\\
3.690908	8230	\\
4.195767	8535	\\
4.713091	8535	\\
5.198233	8555	\\
5.713408	8555	\\
6.248279	8555	\\
6.743797	8555	\\
7.266685	8710	\\
7.760638	8710	\\
8.254118	8845	\\
8.786330	8845	\\
9.302532	8845	\\
9.820230	8845	\\
10.350824	8845	\\
10.919719	8860	\\
11.480425	8910	\\
12.024157	8935	\\
12.571874	8935	\\
13.108973	9010	\\
13.645392	9010	\\
14.203255	9010	\\
14.793345	9010	\\
15.350878	9030	\\
15.932735	9055	\\
16.472825	9055	\\
17.016390	9100	\\
17.552307	9100	\\
18.065207	9100	\\
18.608982	9100	\\
19.197684	9100	\\
19.757303	9150	\\
20.304439	9150	\\
20.833913	9150	\\
21.349021	9150	\\
21.894909	9150	\\
22.439456	9155	\\
22.938985	9175	\\
23.474202	9175	\\
24.016636	9200	\\
24.585746	9200	\\
25.151430	9200	\\
25.719399	9200	\\
26.265036	9200	\\
26.807969	9200	\\
}; \label{plot:progress:genetics:50:tournament:5}
\addplot [
color=green,
mark=triangle*,
mark size=3pt,
solid,
]
table[row sep=crcr]{
0 6800 \\
1.345562	9200	\\
2.360399	9200	\\
3.391848	9200	\\
4.227045	9200	\\
4.980540	9200	\\
5.669363	9200	\\
}; \label{plot:progress:memetics:50:tournament:5}

\end{axis}

\end{tikzpicture}%
\caption{Instance 2. \ref{plot:progress:genetics:10:linear:20} and \ref{plot:progress:memetics:10:linear:20}: genetic and memetic with $N_\text{pop} = 10$, strategy $\pi^3_S$, $t_\text{GA} = 20$. \ref{plot:progress:genetics:50:tournament:5} and \ref{plot:progress:memetics:50:tournament:5}: genetic and memetic with $N_\text{pop} = 50$, tournament selection strategy, $t_\text{GA} = 5$.\\
All algorithms found the same solution ($9200$). Numbers of generations: \ref{plot:progress:genetics:10:linear:20}: 80, but the cheapest iterations; \ref{plot:progress:memetics:10:linear:20} 26, but more complex generations (due to local search) makes it only 2 sec faster than genetic; \ref{plot:progress:genetics:50:tournament:5}: 50, the bigger population makes the generations more demanding; \ref{plot:progress:memetics:50:tournament:5}: 6, the local search is demanding here, this is even slower than the magenta algorithm with 4 times more generations. \\
Note that \ref{plot:progress:memetics:10:linear:20} and \ref{plot:progress:memetics:50:tournament:5} spent most time on checking $t_\text{GA}$ generations with the same value.
}\label{fig:exp:gen:progress}
\end{figure}

    \subsubsection{Properties of the local search algorithm}

    Table~\ref{tab:exp:localsearch} shows the results over $10$ runs of the local search method with either the first-improvement policy (Alg.~\ref{alg:localSearch:first}) or the best-improvement policy (Alg.~\ref{alg:localSearch:best}). In each run, both of the local search methods start from the same randomly generated configuration. The highest optimal value found during the runs, the average number of linear programs solved and the average number of iterations are reported for each method.

    The first-improvement policy, which selects the first neighboring configuration with a higher objective value, was able to find a better solution for all instances. The numbers of iterations suggest that the best improvement policy quickly ends in a local maximum. On the other hand, the random nature of choosing the first improving neighbor seems to make the algorithm able to reach a solution of a decent quality. 

    The number of iterations of the first-improvement policy is (on average) greater than the total number of supplies and demands (40). This means that some of the supplies or demands are modified repeatedly. Also, the average numbers of LPs indicates that the average number of examined neighbors in an iteration of local search with the first-improvement policy is $\approx12$ (from the 39 neighbors available). For the best-improvement policy, the average number of LPs per iteration is $\approx39$ (as expected).

\newcolumntype{K}[0]{>{\centering\arraybackslash}p{40pt}}
    
\begin{table}[h!]
    \begin{tabular}{cKKK@{\hskip20pt}KKK}
    \hline
& \multicolumn{3}{c}{Local search with first-improvement} & \multicolumn{3}{c}{Local search with best-improvement} \\
& Obj. value & LPs & Iterations & Obj. value & LPs & Iterations \\ \hline
1 & \bf 9405	&	535.0	&	42.1	&	8175	&	90.7	&	2.3 \\
2 & \bf 9080	&	523.7	&	41.2	&	7845	&	94.6	&	2.4 \\
3 & \bf 9405	&	535.0	&	42.1	&	8175	&	90.7	&	2.3 \\
4 & \bf 9130	&	499.0	&	37.2	&	7810	&	110.2	&	2.8 \\
5 & \bf 9420	&	599.1	&	47.4	&	7340	&	110.2	&	2.8 \\
6 & \bf 10320	&	679.4	&	53.5	&	7825	&	98.5	&	2.5 \\
7 & \bf 8700	&	487.2	&	37.8	&	7780	&	102.4	&	2.6 \\
8 & \bf 9260	&	575.0	&	44.9	&	7370	&	110.2	&	2.8 \\
9 & \bf 9885	&	546.2	&	40.4	&	8285	&	110.2	&	2.8 \\
10 & \bf 9315	&	548.3	&	44.7	&	7710	&	102.4	&	2.6 \\
\hline
\end{tabular}
\caption{The highest worst optimal value found by the local search algorithm with the first-improvement and the best-improvement policy over $10$ runs. The average number of linear programs solved and the average number of iterations for the algorithms are reported in the respective columns.}\label{tab:exp:localsearch}
\end{table}
	
	\section{Conclusion}\label{sec:Conclusion}
	Interval transportation problems provide a mathematical model for handling and solving transportation problems affected by uncertainty, in which the values of supply, demand, and the transportation costs can vary within given interval ranges. In this paper, we addressed the problem of computing the worst finite optimal value of interval transportation problems, which was formerly proved to be NP-hard. Since an exact calculation of the value can be computationally demanding, we focused on heuristic approaches for approximating the value.

    Building on a recent result showing a decomposition of the problem into a set of quasi-extreme scenarios, we proposed a memetic algorithm for finding a lower bound on the worst finite optimal value of an interval transportation problem. We first derived a stronger result restricting the decomposition to the set of balanced quasi-extreme scenarios, in which the total supply and total demand are equal. Based on this theoretical result, we defined an encoding of the scenarios and proposed a local search algorithm on the set of encodings, as well as a genetic algorithm with suitable mutation and crossover operators. Combining the two methods, we formulated a memetic algorithm using both evolutionary improvement and learning through local search.

    The conducted computational experiments illustrate that the proposed memetic algorithm integrates the efficiency of the local search method and the quality of the solutions found by the population-based genetic algorithm. This leads to a method for finding better lower bounds, while also maintaining a fast computation time. Furthermore, the memetic algorithm has been able to find the new best results for 2 of instances (out of 10), while for the other instances, it finds the best known solutions faster than the competitors.

\backmatter

\section*{Declarations}

\bmhead{Funding}
E. Radov\'{a} Garajov\'{a} and M. Rada were supported by the Czech Science Foundation under grant 23-07270S.

\bmhead{Competing interests}
The authors have no competing interests to declare that are relevant to the content of this article.

\bmhead{Data availability} The data generated by the authors are available in the GitHub repository \citep{Rada:Github:2025}. Data from previously published papers of other authors are not disclosed here.

\bmhead{Code availability} The source code of the implementation is available in the GitHub repository \citep{Rada:Github:2025}.

\bmhead{Author contribution} Both authors contributed equally to this work.

\bibliography{memetic-alg}

@misc{Rada:Github:2025,
  author = {Rada, M. and Radová Garajová, E.},
  title = {Interval Transportation Problems},
  year = {2025},
  publisher = {GitHub},
  journal = {GitHub repository},
  url = {https://github.com/polyraroh/itp}
}

@book{Sivanandam:IntroductionGeneticAlgorithms:2008,
  title = {Introduction to {{Genetic Algorithms}}},
  author = {Sivanandam, S. N. and Deepa, S. N.},
  year = {2008},
  publisher = {Springer},
  address = {Berlin, Heidelberg},
  doi = {10.1007/978-3-540-73190-0},
  urldate = {2025-02-14},
  copyright = {http://www.springer.com/tdm},
  isbn = {978-3-540-73189-4},
  langid = {english}
}

@article{Carrabs:ImprovedHeuristicApproach:2021a,
  title = {An Improved Heuristic Approach for the Interval Immune Transportation Problem},
  author = {Carrabs, Francesco and Cerulli, Raffaele and D'Ambrosio, Ciriaco and Della Croce, Federico and Gentili, Monica},
  year = {2021},
  month = oct,
  journal = {Omega},
  volume = {104},
  pages = {102492},
  issn = {0305-0483},
  doi = {10.1016/j.omega.2021.102492},
  urldate = {2024-04-03}
}

@inproceedings{Cerulli:BestWorstValues:2017,
  title = {Best and {{Worst Values}} of the {{Optimal Cost}} of the {{Interval Transportation Problem}}},
  booktitle = {Optimization and {{Decision Science}}: {{Methodologies}} and {{Applications}}},
  author = {Cerulli, R. and D'Ambrosio, C. and Gentili, M.},
  editor = {Sforza, Antonio and Sterle, Claudio},
  year = {2017},
  series = {Springer {{Proceedings}} in {{Mathematics}} \& {{Statistics}}},
  pages = {367--374},
  publisher = {Springer},
  address = {Cham},
  doi = {10.1007/978-3-319-67308-0_37},
  isbn = {978-3-319-67308-0},
  langid = {english}
}

@article{DAmbrosio:OptimalValueRange:2020,
  title = {The Optimal Value Range Problem for the {{Interval}} (Immune) {{Transportation Problem}}},
  author = {D'Ambrosio, C. and Gentili, M. and Cerulli, R.},
  year = {2020},
  month = sep,
  journal = {Omega},
  volume = {95},
  pages = {102059},
  issn = {0305-0483},
  doi = {10.1016/j.omega.2019.04.002},
  urldate = {2023-05-23},
  langid = {english}
}

@article{Garajova:ComplexityComputingWorst:2024,
  title = {Complexity of Computing the Worst Optimal Value of Interval Transportation Problems},
  author = {Garajov{\'a}, Elif and Rada, Miroslav},
  year = {2024},
  month = nov,
  journal = {Central European Journal of Operations Research},
  issn = {1613-9178},
  doi = {10.1007/s10100-024-00947-8},
  urldate = {2025-01-22},
  langid = {english}
}

@article{Garajova:IntervalTransportationProblem:2023,
  title = {Interval Transportation Problem: Feasibility, Optimality and the Worst Optimal Value},
  shorttitle = {Interval Transportation Problem},
  author = {Garajov{\'a}, Elif and Rada, Miroslav},
  year = {2023},
  month = sep,
  journal = {Central European Journal of Operations Research},
  volume = {31},
  number = {3},
  pages = {769--790},
  issn = {1613-9178},
  doi = {10.1007/s10100-023-00841-9},
  urldate = {2023-06-30},
  langid = {english}
}

@inproceedings{Garajova:QuasiextremeReductionInterval:2024,
  title = {A {{Quasi-extreme Reduction}} for~{{Interval Transportation Problems}}},
  booktitle = {Dynamics of {{Information Systems}}},
  author = {Garajov{\'a}, Elif and Rada, Miroslav},
  editor = {Moosaei, Hossein and Hlad{\'i}k, Milan and Pardalos, Panos M.},
  year = {2024},
  pages = {83--92},
  publisher = {Springer Nature Switzerland},
  address = {Cham},
  doi = {10.1007/978-3-031-50320-7_6},
  isbn = {978-3-031-50320-7},
  langid = {english}
}

@article{Hladik:WorstCaseFinite:2018,
  title = {The Worst Case Finite Optimal Value in Interval Linear Programming},
  author = {Hlad{\'i}k, Milan},
  year = {2018},
  journal = {Croatian Operational Research Review},
  volume = {9},
  number = {2},
  pages = {245--254},
  issn = {1848-9931},
  doi = {10.17535/crorr.2018.0019}
}

@article{Hoppmann-Baum:ComplexityComputingMaximum:2022,
  title = {On the {{Complexity}} of {{Computing Maximum}} and {{Minimum Min-Cost-Flows}}},
  author = {{Hoppmann-Baum}, Kai},
  year = {2022},
  journal = {Networks},
  volume = {79},
  number = {2},
  pages = {236--248},
  issn = {1097-0037},
  doi = {10.1002/net.22060},
  urldate = {2023-06-30},
  copyright = {{\copyright} 2021 The Authors. Networks published by Wiley Periodicals LLC.},
  langid = {english}
}

@article{Juman:HeuristicSolutionTechnique:2014,
  title = {A Heuristic Solution Technique to Attain the Minimal Total Cost Bounds of Transporting a Homogeneous Product with Varying Demands and Supplies},
  author = {Juman, Z. A. M. S. and Hoque, M. A.},
  year = {2014},
  month = nov,
  journal = {European Journal of Operational Research},
  volume = {239},
  number = {1},
  pages = {146--156},
  issn = {0377-2217},
  doi = {10.1016/j.ejor.2014.05.004},
  urldate = {2023-06-28},
  langid = {english}
}

@article{Liu:TotalCostBounds:2003,
  title = {The Total Cost Bounds of the Transportation Problem with Varying Demand and Supply},
  author = {Liu, Shiang-Tai},
  year = {2003},
  month = aug,
  journal = {Omega},
  volume = {31},
  number = {4},
  pages = {247--251},
  issn = {0305-0483},
  doi = {10.1016/S0305-0483(03)00054-9},
  urldate = {2023-06-28},
  langid = {english}
}

@incollection{Mohammadi:IntervalLinearProgramming:2020,
  title = {Interval {{Linear Programming}}: {{Optimal Value Range}}},
  shorttitle = {Interval {{Linear Programming}}},
  booktitle = {Encyclopedia of {{Optimization}}},
  author = {Mohammadi, Mohsen and Hlad{\'i}k, Milan and Gentili, Monica},
  editor = {Pardalos, Panos M. and Prokopyev, Oleg A.},
  year = {2020},
  pages = {1--11},
  publisher = {Springer International Publishing},
  address = {Cham},
  doi = {10.1007/978-3-030-54621-2_718-1},
  urldate = {2025-01-22},
  isbn = {978-3-030-54621-2},
  langid = {english}
}

@incollection{Moscato:ModernIntroductionMemetic:2010,
  title = {A {{Modern Introduction}} to {{Memetic Algorithms}}},
  booktitle = {Handbook of {{Metaheuristics}}},
  author = {Moscato, Pablo and Cotta, Carlos},
  editor = {Gendreau, Michel and Potvin, Jean-Yves},
  year = {2010},
  pages = {141--183},
  publisher = {Springer US},
  address = {Boston, MA},
  doi = {10.1007/978-1-4419-1665-5_6},
  urldate = {2025-01-22},
  isbn = {978-1-4419-1665-5},
  langid = {english}
}

@incollection{Rohn:IntervalLinearProgramming:2006,
  title = {Interval Linear Programming},
  booktitle = {Linear {{Optimization Problems}} with {{Inexact Data}}},
  author = {Rohn, J.},
  editor = {Fiedler, M. and Nedoma, J. and Ram{\'i}k, J. and Rohn, J. and Zimmermann, K.},
  year = {2006},
  pages = {79--100},
  publisher = {Springer US},
  address = {Boston, MA},
  doi = {10.1007/0-387-32698-7_3},
  urldate = {2023-05-23},
  isbn = {978-0-387-32698-6},
  langid = {english}
}

@article{Xie:UpperBoundMinimal:2017,
  title = {An Upper Bound on the Minimal Total Cost of the Transportation Problem with Varying Demands and Supplies},
  author = {Xie, Fanrong and Butt, Muhammad Munir and Li, Zuoan and Zhu, Linzhi},
  year = {2017},
  month = apr,
  journal = {Omega},
  volume = {68},
  pages = {105--118},
  issn = {0305-0483},
  doi = {10.1016/j.omega.2016.06.007},
  urldate = {2023-06-28},
  langid = {english}
}

\end{document}